\newtheorem{Thm}{Theorem}[section]
\newtheorem{Lem}[Thm]{Lemma}
\newtheorem{Cor}[Thm]{Corollary}
\newtheorem{Prop}[Thm]{Proposition}
\theoremstyle{definition}
\newtheorem{Rem}{Remark}[section]
\newtheorem*{Def*}{Definition}
\newcommand{\R}{{\mathbb R}}
\numberwithin{equation}{section}
\def\@makefnmark{}
\begin{document}

\title[Nondegeneracy for the Choquard equation in two dimension]{Nondegeneracy of bubble solutions to the Choquard equation in two dimension}

\author[Jinkai Gao]{Jinkai Gao}
\address[Jinkai Gao]{\newline\indent School of Mathematical Sciences,
\newline\indent Nankai University,
\newline\indent Tianjin, 300071, PRC.}
    \email{\href{mailto:jinkaigao@mail.nankai.edu.cn}{jinkaigao@mail.nankai.edu.cn}}

\author[Xinfu Li]{Xinfu Li}
\address[Xinfu Li]{\newline\indent School of Science,
\newline\indent  Tianjin University of Commerce,
\newline\indent Tianjin, 300134, PRC.}
    \email{\href{mailto:lxylxf@tjcu.edu.cn}{lxylxf@tjcu.edu.cn}}

 \author[Shiwang Ma]{Shiwang Ma$^*$}
\address[Shiwang Ma]
{\newline\indent
			School of Mathematical Sciences and LPMC,
			\newline\indent
			Nankai University,
			\newline\indent
			Tianjin 300071, PRC.}
	\email{\href{mailto:shiwangm@nankai.edu.cn}{shiwangm@nankai.edu.cn}}
 \thanks{Corresponding author:
 {\tt Shiwang Ma}.}

\subjclass[2020]{Primary 35J15; Secondary  42B37.}
\date{\today}
\keywords{Choquard equation; Exponential nonlinearity; Bubble solutions; Nondegeneracy.}

\begin{abstract}
In this paper, we study the following Choquard equation with exponential nonlinearity
\begin{equation*}
     -\Delta u=\left(\int_{\R^{2}}\frac{e^{u(y)}}{|x-y|^{\alpha}}dy\right)e^{u(x)},\quad \text{~in~}\R^{2},
\end{equation*}
where $\alpha\in (0,2)$. Although the classification of solutions to this equation has been established recently, the nondegeneracy of its solutions remains open. Here, we prove the nondegeneracy by combining the integral representation of solutions with the spherical harmonic decomposition. The main result of this paper can be viewed as an extension of the nondegeneracy of solutions for both the planar Liouville equation and the higher-dimensional upper critical Choquard equation.
\end{abstract}

\maketitle


\section{Introduction}
The Choquard equation 
\begin{equation}\label{subcritica equation}
-\Delta u+u=\left(\displaystyle{\int_{\R^{3}}}\frac{u^{2}(y)}{|x-y|}dy\right){u},\ \ \text{in}\ \R^{3},
\end{equation}
arises in various mathematical and physical contexts, including the Polaron Theory \cite{FrohlichHerbert,Pekar}, one-component plasma models \cite{Lieb-1976} and bosonic many-body systems with attractive long-range interactions, particularly in the study of Bose-Einstein condensates \cite{Frohlich-2003,Lewin-2014}. A closely related and mathematically crucial extension of \eqref{subcritica equation} is the following upper critical Choquard equation
\begin{equation}\label{upper critical Choquard equation}
-\Delta u=\left(\displaystyle{\int_{\R^{N}}}\frac{u^{2^{*}_{\alpha}}(y)}{|x-y|^{\alpha}}dy\right){u}^{2^{*}_{\alpha}-1},\ \ \text{in}\ \R^{N},
\end{equation}
where $N\geq 3$, $\alpha\in (0,N)$ and $2^*_{\alpha}:=\frac{2N-\alpha}{N-2}$ is the upper critical exponent in the sense of the Hardy-Littlewood-Sobolev inequality. Note that equation \eqref{upper critical Choquard equation} remains invariant under the rescaling transformation
\begin{equation}
  u(x)\mapsto  u_{\delta,x_{0}}(x):=\delta^{\frac{N-2}{2}}u(\delta (x-x_{0})),\text{~for~}\delta>0 \text{~and~}x_{0}\in\R^{N},
\end{equation}
which preserves both the $\dot{H}^{1}(\R^{N})$ norm and the $ L^{2^*}(\R^{N})$ norm, where $2^*:=\frac{2N}{N-2}$ is the critical Sobolev exponent. This scaling invariance leads to non-compactness in the associated variational problem. 

The fundamental questions concerning the existence and classification of solutions to equation \eqref{upper critical Choquard equation} were completely solved in \cite{Miao-2015,Lei-2018,Du2019,Guo2019,LePhuong-2020}. Specifically, the following classification theorem was established.
\smallskip

\noindent\textbf{Theorem A.}\emph{
Assume that $N\geq 3$ and $\alpha\in (0,N)$. Let $u\in\dot{H}^{1}(\R^{N})$ be a  positive nontrivial weak solution to equation \eqref{upper critical Choquard equation}, then $u$ must be of the form
\begin{equation}
    u(x)=\bar{V}_{\mu,\zeta}(x):=\frac{\left(N(N-2)\right)^{\frac{N-2}{4}}}{(S^{\frac{N-\alpha}{2}}C(N,\alpha))^{\frac{N-2}{2(N+2-\alpha)}}}\left(\frac{\mu}{1+\mu^{2}|x-\zeta|^{2}}\right)^{\frac{N-2}{2}}:=\bar{C}_{N,\alpha}V_{\mu,\zeta}(x),
\end{equation}
where the parameters $\mu>0$ and $ \zeta\in\R^{N}$ represent scaling and translation respectively, $S$ denotes the best Sobolev constant for the embedding $\dot{H}^{1}(\R^{N})\hookrightarrow L^{2^*}(\R^{N})$, given explicitly by
\begin{equation}
     S:=\pi N(N-2)\left(\frac{\Gamma(N/2)}{\Gamma(N)}\right)^{2/N},
\end{equation}
and $C(N,\alpha)$ is the best constant in the Hardy-Littlewood-Sobolev inequality defined by
\begin{equation}
    C(N,\alpha):=\pi^{\frac{\alpha}{2}}\frac{\Gamma\left(\frac{N-\alpha}{2}\right)}{\Gamma\left(N-\frac{\alpha}{2}\right)}\left(\frac{\Gamma(N)}{\Gamma\left(\frac{N}{2}\right)}\right)^{\frac{N-\alpha}{N}},
\end{equation}
where $\Gamma(\cdot)$ is the Gamma function. Moreover, we have
\begin{equation}
    \begin{aligned}
       \int_{\R^{N}} \bar{V}_{\mu,\zeta}^{2^{*}}dx=\frac{\bar{C}_{N,\alpha}^{2^*}S^{\frac{N}{2}}}{(N(N-2))^{\frac{N}{2}}}\text{~and~}  \int_{\R^{N}}\int_{\R^{N}}\frac{\bar{V}_{\mu,\zeta}^{2^{*}_{\alpha}}(y){\bar{V}_{\mu,\zeta}}^{2^{*}_{\alpha}}(x)}{|x-y|^{\alpha}}dydx=\frac{\bar{C}_{N,\alpha}^{2}S^{\frac{N}{2}}}{(N(N-2))^{\frac{N-2}{2}}}.
    \end{aligned}
\end{equation}
In addition
\begin{equation}
   \int_{\R^{N}}\frac{\bar{V}_{\mu,\zeta}^{2^{*}_{\alpha}}(y)}{|x-y|^{\alpha}}dy=\frac{N(N-2)}{\bar{C}_{N,\alpha}^{2^*-2}}\bar{V}_{\mu,\zeta}^{2^*-2^*_{\alpha}}(x).
\end{equation}}

A natural question in studying bubble solutions to \eqref{upper critical Choquard equation} is whether all positive solutions are nondegenerate. This problem had been open for a long time, with only partial results available \cite{Du2019,Giacomoni-Wei-yang-2020,Li-Tang-Xu-2022}, until Li et al. \cite{lixuemei2023nondegeneracy,Li-2025-FM} solved it completely. To be more precise, they obtained

\smallskip

\noindent\textbf{Theorem B.}\label{thm nondegeneracy} \emph{
Assume that $N\geq 3$ and $\alpha\in (0,N)$. Let $\varphi \in \dot{H}^{1}(\R^N)$ be a weak solution to the equation
\begin{equation*}
      -\Delta\varphi=2^{*}_{\alpha} \left(\displaystyle{\int_{\R^N}}\frac{\bar{V}_{\mu,\zeta}^{2^{*}_{\alpha}-1}(y)\varphi(y)}{|x-y|^{\alpha}}dy\right)\bar{V}_{\mu,\zeta}^{2^{*}_\alpha-1}+(2^{*}_\alpha-1)\left(\displaystyle{\int_{\R^N}}\frac{\bar{V}_{\mu,\zeta}^{2^{*}_\alpha}(y)}{|x-y|^{\alpha}}dy\right)\bar{V}_{\mu,\zeta}^{2^{*}_\alpha-2}\varphi,\ \ \textrm{in}\ \R^{N}.
\end{equation*}
Then $\varphi$ admits the representation
\begin{equation}
    \varphi=\sum_{j=1}^{N}a_{j} {\partial_{\zeta_{j}}}\bar{V}_{\mu,\zeta}+b{\partial_{\mu}} \bar{V}_{\mu,\zeta},
\end{equation}
where $a_{j},b\in\R$, $j=1,\cdots,N$.}

In contrast to higher dimensions, the two-dimensional case $N = 2$ exhibits distinct behavior. First, the Sobolev embedding yields $H^1_0(\Omega) \hookrightarrow L^q(\Omega)$ for all $q \geq 1$, but fails to embed into $L^\infty(\Omega)$, when $\Omega$ is a bounded domain. Second, and more strikingly, the modified equation \eqref{upper critical Choquard equation} with $2^*_{\alpha}$ replaced by arbitrary $p\in\R$ admits no positive solutions, as established in \cite{LePhuong-2020}. However, the Trudinger-Moser inequality \cite{Moser-1970,Trudinger-1967} provides the functional framework for analyzing nonlinear elliptic problems with exponential growth. The paradigmatic example is the planar Liouville equation
\begin{equation}\label{Lane-Emden}
    -\Delta u = e^{u},\ \ \text{in}\ \R^{2},
\end{equation}
which arises in a broad range of applications, including astrophysics and combustion theory \cite{Chandrasekhar}, the prescribed Gaussian curvature problem \cite{Kazdan-1975}, the mean field limit of vortices in Euler flows \cite{Caglioti-1995}, and vortices in relativistic Maxwell-Chern-Simons-Higgs theory \cite{Caffarelli-1995}. The mathematical treatment of this problem can be traced back to  at least \cite{FOWLER-1931, Gelcprime}. In particular, the classical solutions with the finite integral condition have been Classified by Chen and Li in their celebrated paper \cite{Chen-Li-1991}.

\smallskip

\noindent\textbf{Theorem C.} \emph{ Let $u$ be a classical solution to the equation \eqref{Lane-Emden} satisfying the condition
\begin{equation}\label{finite integral assume}
    \int_{\R^{2}}e^{u(x)}dx<+\infty.
\end{equation}
Then $u$ must be of the form
\begin{equation}
    u(x)=\bar{U}_{\mu,\zeta}(x):=2\log\left(\frac{2\sqrt{2}\mu}{1+\mu^{2}|x-\zeta|^{2}}\right),
\end{equation}
where $\mu>0$ and $\zeta\in\R^{2}$ are two parameters. Moreover, we have
\begin{equation}
    \int_{\R^{2}}e^{\bar{U}_{\mu,\zeta}(x)}dx=8\pi. 
\end{equation}
}

Later, the nondegeneracy of solutions to \eqref{Lane-Emden} was established by several authors \cite{Esposito-Grossi-Pistoia-2005,Baraket-Pacard-1998,Chen-Lin-2002,ElMehdi-Grossi-2004}.

\smallskip

\noindent\textbf{Theorem D.}\emph{
Let $\varphi \in L^{\infty}(\R^{2})$ be a classical solution to the equation
\begin{equation*}
      -\Delta\varphi=e^{\bar{U}_{\mu,\zeta}}\varphi,\ \ \text{in}\ \R^{2}.
\end{equation*}
Then $\varphi$ admits the representation
\begin{equation}
    \varphi=\sum_{j=1}^{2}a_{j} {\partial_{\zeta_{j}}}\bar{U}_{\mu,\zeta}+b{\partial_{\mu}} \bar{U}_{\mu,\zeta},
\end{equation}
where $a_{j},b\in\R$, $j=1,2$.}

Inspired by the previous work, we are naturally led to the following question: What is the effect of replacing the nonlinearity in \eqref{Lane-Emden} with a nonlocal Choquard-type nonlinearity? In this paper, we will consider this problem. Precisely, we study the following planar Choquard equation with exponential nonlinearity
\begin{equation}\label{slightly subcritical choquard equation}
     -\Delta u=\left(\int_{\R^{2}}\frac{e^{u(y)}}{|x-y|^{\alpha}}dy\right)e^{u(x)},\ \ \text{in}\ \R^{2}.
\end{equation}
First, the classification of solutions to \eqref{slightly subcritical choquard equation} has been extensively studied in recent works \cite{Guo2024JGA,Yang,Niu2025,Gluck2025dcds}. To state their results precisely, we need to introduce the definition of distributional solution. 

\begin{Def*}
Let $\Omega \subset \mathbb{R}^2$ be a (possibly unbounded) domain. For $f \in L^1_{\text{loc}}(\Omega)$, a \emph{distributional solution} to $-\Delta u = f$ in $\Omega$ is any function $u \in L^1_{{loc}}(\Omega)$ satisfying
\begin{equation}\label{distributional formulation}
    -\int_\Omega u\Delta \varphi = \int_\Omega f \varphi,
\end{equation}
for all $\varphi \in C^\infty_c(\Omega)$.
\end{Def*}

The classification result is as follows.
\smallskip

\noindent\textbf{Theorem E.} \emph{Assume that $\alpha\in(0,2)$. Let $u\in L^{1}_{loc}(\R^{2})$ be a distributional solution to the equation \eqref{slightly subcritical choquard equation}
satisfying the condition
\begin{equation}\label{integral condition-1}
    \int_{\R^{2}}e^{\frac{4}{4-\alpha}u(x)}dx<+\infty.
\end{equation}
Then $u\in C^{\infty}(\R^{2})$ and must take the form
\begin{equation}\label{defin-U-xi-lambda}
    u(x)=U_{\mu,\zeta}(x):=\frac{4-\alpha}{2}\log\left( \frac{C_{\alpha}\mu}{1+\mu^{2}|x-\zeta|^{2}}\right),
\end{equation}
where $C_{\alpha}:=\left(\frac{(2-\alpha)(4-\alpha)}{\pi}\right)^{\frac{1}{4-\alpha}}$ is a positive constant, $\mu>0$ and $\zeta\in\R^{2}$ are two parameters. Moreover, we have
\begin{equation}\label{improtant-estimate-2}
    \int_{\R^{2}}e^{\frac{4}{4-\alpha}U_{\mu,\zeta}(x)}dx=\pi C^{2}_{\alpha}\text{~and~}\int_{\R^{2}}\int_{\R^{2}}\frac{e^{U_{\mu,\zeta}(y)}e^{U_{\mu,\zeta}(x)}}{|x-y|^{\alpha}}dydx=2(4-\alpha)\pi.
\end{equation}
}
In addition,
\begin{equation}\label{improtant-estimate-1}
    \int_{\R^{2}}\frac{e^{U_{\mu,\zeta}(y)}}{|x-y|^{\alpha}}dy=\frac{2(4-\alpha)}{C_{\alpha}^{2}}e^{\frac{\alpha}{4-\alpha}U_{\mu,\zeta}(x)}.
\end{equation}

Unlike the higher-dimensional Choquard equation \eqref{upper critical Choquard equation}, the solutions $U_{\mu,\zeta}$ to \eqref{slightly subcritical choquard equation} are sign-changing and fall outside $\dot{H}^{1}(\mathbb{R}^{2})$. The classification of solutions to \eqref{slightly subcritical choquard equation} instead requires the finite integral condition \eqref{integral condition-1}. On the other hand, it is worth noting that equation \eqref{slightly subcritical choquard equation} is invariant under the rescaling transformation
\begin{equation}\label{rescaling}
    u(x)\mapsto u_{\delta,x_{0}}(x):=u(\delta(x-x_{0}))+\frac{4-\alpha}{2}\log \delta,\text{~for~}\delta>0\text{~and~}x_{0}\in\R^{2},
\end{equation}
which also preserves the integrability condition \eqref{integral condition-1}. 

Once the classification of solutions to \eqref{slightly subcritical choquard equation} has been established, it is natural to investigate whether these solutions are nondegenerate. Since the equation \eqref{slightly subcritical choquard equation} is invariant under the rescaling transformation \eqref{rescaling}, we restrict our analysis to the case where \(\mu = 1\) and \(\zeta = 0\) for simplicity. Before stating the main result, we define 
\begin{equation}
    U(x) := U_{1,0}(x) = \frac{4 - \alpha}{2} \log\left( \frac{C_\alpha}{1 + |x|^2} \right),
\end{equation}
and introduce two weighted Hilbert spaces
\begin{equation}
    L^{2}_{w}(\mathbb{R}^2) := \left\{ u: \left\| \frac{1}{1+|x|^{2}} u \right\|_{L^2(\mathbb{R}^2)} < +\infty\right\},
\end{equation}
and
\begin{equation}
    H^{1}_{w}(\mathbb{R}^2) := \left\{ u : \|\nabla u\|_{L^2(\mathbb{R}^2)} + \left\| \frac{1}{1+|x|^{2}} u \right\|_{L^2(\mathbb{R}^2)} < +\infty \right\},
\end{equation}
equipped with the inner products
    \begin{equation}
            \langle u,v\rangle_{ L^{2}_{w}(\mathbb{R}^2)}:=\int_{\R^{2}}\frac{u(x)v(x)}{(1+|x|^{2})^{2}}dx,       
    \end{equation}
and
\begin{equation}
    \langle u,v\rangle_{ H^{1}_{w}(\mathbb{R}^2)}:=\int_{\R^{2}}\nabla u(x)\cdot\nabla v(x)dx+\int_{\R^{2}}\frac{u(x)v(x)}{(1+|x|^{2})^{2}}dx.
\end{equation}
The induced norms are given by
 \begin{equation}
     \begin{aligned}
         \|u\|_{ L^{2}_{w}(\mathbb{R}^2)}:=\left\|\frac{u(x)}{1+|x|^{2}}\right\|_{L^{2}(\R^{2})}\text{~and~}\|u\|_{ H^{1}_{w}(\mathbb{R}^2)}:=\left(\|\nabla u\|_{L^{2}(\R^{2})}^{2}+\left\|\frac{u(x)}{1+|x|^{2}}\right\|_{L^{2}(\R^{2})}^{2}\right
            )^{\frac{1}{2}}.
     \end{aligned}
 \end{equation}   
 
 \begin{Rem}
 Let $\mathbb{S}^2$ denote the unit sphere in $\mathbb{R}^3$ with the standard metric, and $S_{*}$ be the stereographic projection through the south pole (see Section \ref{section-Preliminaries} for more details). Then, the map $u\mapsto u\circ S_{*}$ is an isometry from $L^2_{w}(\mathbb{R}^2)$ to $L^2(\mathbb{S}^2)$, and from $H^1_{w}(\mathbb{R}^2)$ to $H^1(\mathbb{S}^2)$. 
Hence, by the compactness of the embedding $H^1(\mathbb{S}^2) \hookrightarrow L^2(\mathbb{S}^2)$, we deduce the compactness of the embedding $H^1_{w}(\mathbb{R}^2) \hookrightarrow L^2_{w}(\mathbb{R}^2)$.
 \end{Rem}

Our main theorem is as follows.

\begin{Thm}\label{thm-1}
   Assume that $\alpha\in (0,2)$, and $\varphi\in L^{2}_{w}(\R^{2})$ is a distributional solution to the equation 
\begin{equation}\label{eq-2}
      -\Delta\varphi(x)=\left(\displaystyle{\int_{\R^2}}\frac{e^{U(y)}\varphi(y)}{|x-y|^{\alpha}}dy\right)e^{U(x)}+\left(\displaystyle{\int_{\R^2}}\frac{e^{U(y)}}{|x-y|^{\alpha}}dy\right)e^{U(x)}\varphi(x),\ \ \text{in}\ \R^{2}.
\end{equation}
Then $\varphi\in L^{\infty}(\R^{2})\cap C^{\infty}(\R^{2})$ and 
\begin{equation}\label{identity}
    -\int_{\R^{2}}\Delta\varphi(x) dx=\frac{4(4-\alpha)}{C_{\alpha}^{2}}\int_{\R^{2}}e^{\frac{4}{4-\alpha}U(x)}\varphi(x)dx=0.
\end{equation}
Moreover, $\varphi$ admits the representation
\begin{equation}\label{eq-thm-3}
\begin{aligned}
    \varphi&=\sum_{j=1}^{2} a_{j} \partial_{\zeta_{j}}{U}_{\mu,\zeta}|_{\mu=1,\zeta=0}+b \partial_{\mu}{U}_{\mu,\xi}|_{\mu=1,\zeta=0}\\
    &=\sum_{j=1}^{2} a_{j}\frac{(4-\alpha)x_{j}}{1+|x|^{2}}+b\frac{(4-\alpha)}{2}\frac{1-|x|^{2}}{1+|x|^{2}}:=\sum_{j=1}^{2} a_{j}\varphi_{j}+b\varphi_{3},
\end{aligned}
\end{equation}
for some constants $a_{j},b\in\R$,$j=1,2$. In addition, we have $\varphi\in H^{1}_{w}(\R^{2})$ and 
\begin{equation}\label{eq-thm-4}
    \|\varphi\|_{H^{1}_{w}(\R^{2})}\leq A_{\alpha}\|\varphi\|_{L^{2}_{w}(\R^{2})},
\end{equation}
 where $A_{\alpha}:=\sqrt{4(4-\alpha)+1}$. 
\end{Thm}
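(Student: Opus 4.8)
The plan is to transfer the problem to the sphere $\mathbb{S}^2$ via stereographic projection, use the conformal invariance of the Laplacian to turn \eqref{eq-2} into an eigenvalue-type equation for an integral operator on $\mathbb{S}^2$, and then decompose into spherical harmonics. First I would establish the regularity and the integral identity \eqref{identity}: starting from $\varphi\in L^2_w(\R^2)$, the right-hand side of \eqref{eq-2} lies in a suitable $L^p$ space because $e^{U}\sim (1+|x|^2)^{-(4-\alpha)/2}$ decays fast and the Riesz potential $\int e^U(y)|x-y|^{-\alpha}\,dy$ is bounded (by \eqref{improtant-estimate-1} it equals a multiple of $e^{\frac{\alpha}{4-\alpha}U}$, which is bounded); a bootstrap using standard elliptic and Hardy--Littlewood--Sobolev estimates then gives $\varphi\in L^\infty\cap C^\infty$. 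Integrating \eqref{eq-2} over $\R^2$ and using \eqref{improtant-estimate-1} again collapses both nonlocal terms into $\frac{4(4-\alpha)}{C_\alpha^2}\int e^{\frac{4}{4-\alpha}U}\varphi$, while $\int_{\R^2}\Delta\varphi=0$ because $\nabla\varphi$ decays; this yields \eqref{identity}, which is exactly the orthogonality constraint that will later kill the constant ($\ell=0$) mode.

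Next I would pull back to the sphere. Writing $v:=\varphi\circ S_*$ and using that $S_*^*(\text{Euclidean metric}) = \big(\tfrac{2}{1+|x|^2}\big)^2 g_{\mathbb{S}^2}$ together with the conformal covariance $-\Delta_{\R^2}\varphi = \big(\tfrac{1+|x|^2}{2}\big)^2(-\Delta_{\mathbb{S}^2}v)$, equation \eqref{eq-2} becomes, after substituting the explicit form of $e^{U}$ and \eqref{improtant-estimate-1}, an equation of the schematic form
\begin{equation*}
    -\Delta_{\mathbb{S}^2} v = c_1 \Big(\int_{\mathbb{S}^2} K(\xi,\eta) v(\eta)\,d\sigma(\eta)\Big) + c_2\, v,
\end{equation*}
where $K$ is (a constant multiple of) the pulled-back Riesz kernel $|S_*^{-1}\xi - S_*^{-1}\eta|^{-\alpha}$ times the conformal factors, hence a \emph{zonal} kernel depending only on the geodesic distance (equivalently on $\xi\cdot\eta$). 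The zonal structure is the crucial point: by the Funk--Hecke theorem, every spherical harmonic $Y_\ell$ of degree $\ell$ is an eigenfunction of the integral operator with eigenvalue $\lambda_\ell$ given by an explicit one-dimensional integral (a Gegenbauer-type integral), and $-\Delta_{\mathbb{S}^2}$ has eigenvalue $\ell(\ell+1)$ on that space. Expanding $v=\sum_{\ell\ge 0}\sum_m v_{\ell,m}Y_{\ell,m}$, the equation decouples into the scalar conditions $\big(\ell(\ell+1) - c_2 - c_1\lambda_\ell\big)v_{\ell,m}=0$ for every $\ell,m$. One then checks that the bracket vanishes only for $\ell=1$: for $\ell=0$ it is nonzero (or, if it happens to vanish, the constant mode is excluded by \eqref{identity}), and for $\ell\ge 2$ one shows $\ell(\ell+1) > c_2 + c_1\lambda_\ell$ using monotonicity/decay of $\lambda_\ell$ in $\ell$ — this comparison is the analogue of the ``$\ell=1$ is the only resonant mode'' computation in Theorems B and D. Hence $v$ is a degree-one spherical harmonic, i.e. $v$ is the restriction of a linear function on $\R^3$; pulling back through $S_*$, the three-dimensional space of such $v$ corresponds exactly to $\mathrm{span}\{\varphi_1,\varphi_2,\varphi_3\}$ in \eqref{eq-thm-3}, and a direct check confirms these are the pullbacks of the three coordinate functions (two translations and one dilation generator).

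Finally, for the quantitative bound \eqref{eq-thm-4}: once $\varphi = a_1\varphi_1 + a_2\varphi_2 + b\varphi_3$ is known explicitly, \eqref{eq-thm-4} is a finite-dimensional inequality — each $\varphi_j$ is smooth, bounded, with $|\nabla\varphi_j|$ decaying like $|x|^{-1}$, so both $\|\nabla\varphi_j\|_{L^2}$ and $\|\varphi_j\|_{L^2_w}$ are finite; alternatively, and more cleanly, one multiplies \eqref{eq-2} by $\varphi/(1+|x|^2)^0$... rather, one tests \eqref{eq-2} against $\varphi$ itself, integrates by parts to get $\|\nabla\varphi\|_{L^2}^2$ on the left, and bounds the right-hand side by $\frac{4(4-\alpha)}{C_\alpha^2}\|e^{\frac{4}{4-\alpha}U}\|_{\cdots}$-type estimates controlled by $\|\varphi\|_{L^2_w}^2$ using $e^{\frac{4}{4-\alpha}U}\le C_\alpha^2/(1+|x|^2)^2$ and \eqref{improtant-estimate-1}; adding $\|\varphi\|_{L^2_w}^2$ to both sides gives \eqref{eq-thm-4} with $A_\alpha = \sqrt{4(4-\alpha)+1}$.

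\emph{Main obstacle.} The technical heart is the spectral computation on the sphere: identifying the pulled-back kernel as genuinely zonal (the conformal factors from $-\Delta$ and from the measure must combine with $|S_*^{-1}\xi-S_*^{-1}\eta|^{-\alpha}$ to leave a function of $\xi\cdot\eta$ only — this uses the identity $|S_*^{-1}\xi - S_*^{-1}\eta| = \tfrac{2|x-y|}{\sqrt{(1+|x|^2)(1+|y|^2)}}$), evaluating the Funk--Hecke eigenvalues $\lambda_\ell$ in closed form (in terms of Gamma functions depending on $\alpha$), and then proving the strict inequality $\ell(\ell+1) > c_2 + c_1\lambda_\ell$ for all $\ell\ge 2$ while it is an equality at $\ell=1$. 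Getting these constants to line up is where the special structure of the Choquard exponential bubble — and the precise value of $C_\alpha$ — really enters.
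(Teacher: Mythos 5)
Your overall skeleton — stereographic projection, spherical harmonic decomposition, Funk--Hecke eigenvalues, resonance only at degree one, and testing \eqref{eq-2} against $\varphi$ to get \eqref{eq-thm-4} — matches the paper's strategy, and your eigenvalue bookkeeping would indeed come out right (in your notation $c_2=\frac{4-\alpha}{2}$, $c_1\lambda_1=\frac{\alpha}{2}$, so the degree-one bracket is $2=1\cdot2$, and $\ell(\ell+1)>c_2+c_1\lambda_\ell$ for $\ell\ge2$ since $\lambda_\ell$ decreases). However, two steps as written have genuine gaps. First, the preliminaries: a local elliptic/HLS bootstrap only yields $\varphi\in L^\infty_{\mathrm{loc}}$; global boundedness requires an argument at infinity (the paper uses a Kelvin transform in Proposition \ref{prop-distributional}). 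More seriously, your derivation of \eqref{identity} by integrating \eqref{eq-2} and asserting $\int_{\R^2}\Delta\varphi\,dx=0$ ``because $\nabla\varphi$ decays'' is circular: the a priori decay available is only $|\nabla\varphi|\lesssim|x|^{-1}$, which makes the flux $\int_{\partial B_R}\partial_\nu\varphi$ bounded but not vanishing; in fact the vanishing of this flux is \emph{equivalent} to \eqref{identity} (if $\int\mathfrak{N}(\varphi)\neq0$, $\varphi$ grows like $\log|x|$ and the flux is exactly $-\int\mathfrak{N}(\varphi)$). The paper gets \eqref{identity} differently: it writes $\varphi=K(\mathfrak{N}(\varphi))+P$, shows $P$ is constant via log-growth estimates and a Liouville-type lemma, and then uses the boundedness of $\varphi$ to force the coefficient of $\log|x|$ to vanish (Proposition \ref{prop-integral representation}).

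The second gap is the decoupling step itself. You transfer the \emph{differential} equation to $\mathbb{S}^2$ by conformal covariance and then pair with $Y_{k,m}$. But \eqref{eq-2} is only assumed in the sense of \eqref{distributional formulation}, i.e.\ against $C_c^\infty(\R^2)$ test functions; the pullbacks of spherical harmonics are not compactly supported in $\R^2$, and the transferred equation is a priori known only on $\mathbb{S}^2\setminus\{(0,0,-1)\}$. To make your scalar relations $(\ell(\ell+1)-c_2-c_1\lambda_\ell)v_{\ell,m}=0$ rigorous you must (i) prove removability of the singularity at the image of infinity and (ii) justify two integrations by parts against $Y_{k,m}$, i.e.\ control boundary terms at infinity — exactly the decay information whose absence caused the first gap. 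This is precisely why the paper does not work with the differential equation on the sphere: it transfers the \emph{integral} representation (Lemma \ref{lemma-4.4}) and applies Funk--Hecke simultaneously to the logarithmic and Riesz kernels (Proposition \ref{prop-4.5}), at the cost of the extra eigenvalues $\tilde{\mu}_k$ but with no removability or boundary-term issues. If you first establish the representation formula (or carry out a careful removable-singularity plus cutoff argument), the remainder of your plan — including the $\ell=0$ non-resonance, the strict inequality for $\ell\ge2$, and the proof of \eqref{eq-thm-4} — goes through essentially as in the paper.
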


Since $\dot{H}^{1}(\mathbb{R}^{2})\subset L^{2}_{w}(\R^{2})$ (see Appendix \ref{appendix}), we then obtain the following corollary.
\begin{Cor}\label{thm-weak-solution}
Assume that $\alpha\in (0,2)$ and $\varphi\in \dot{H}^{1}(\mathbb{R}^{2})$ is a weak solution to the equation \eqref{eq-2}. Then, the conclusions of Theorem \ref{thm-1} hold.
\end{Cor}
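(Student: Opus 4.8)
The plan is to deduce the corollary directly from Theorem \ref{thm-1} by verifying that an $\dot{H}^1$ weak solution automatically satisfies the two hypotheses of that theorem, namely membership in $L^2_w(\mathbb{R}^2)$ and the distributional form of equation \eqref{eq-2}. The whole argument is therefore a reduction rather than a fresh computation, and the substantive analytic input has already been isolated elsewhere.

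First I would invoke the embedding $\dot{H}^1(\mathbb{R}^2) \subset L^2_w(\mathbb{R}^2)$ established in Appendix \ref{appendix}; this immediately places $\varphi$ in $L^2_w(\mathbb{R}^2)$. Concretely, this embedding amounts to the weighted estimate $\|\varphi/(1+|x|^2)\|_{L^2(\mathbb{R}^2)} \le C \|\nabla\varphi\|_{L^2(\mathbb{R}^2)}$, which can be read off from the conformal invariance of the Dirichlet energy in dimension two together with the isometry $u \mapsto u \circ S_*$ described in the Remark: the gradient norm is preserved under stereographic projection, while the weighted $L^2$ norm corresponds to $\|u \circ S_*\|_{L^2(\mathbb{S}^2)}$, so the estimate reduces to a Poincar\'e-type inequality on the sphere.

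Second I would check the compatibility of the two solution concepts. By hypothesis $\varphi$ satisfies the weak formulation $\int_{\mathbb{R}^2} \nabla\varphi \cdot \nabla\psi\,dx = \int_{\mathbb{R}^2} f\psi\,dx$, where $f$ denotes the right-hand side of \eqref{eq-2}, for all admissible test functions; since $C_c^\infty(\mathbb{R}^2) \subset \dot{H}^1(\mathbb{R}^2)$, we may take $\psi \in C_c^\infty(\mathbb{R}^2)$. For such $\psi$, the definition of the weak gradient applied twice gives $\int_{\mathbb{R}^2} \nabla\varphi \cdot \nabla\psi\,dx = -\int_{\mathbb{R}^2} \varphi\,\Delta\psi\,dx$, so the weak formulation becomes $-\int_{\mathbb{R}^2} \varphi\,\Delta\psi\,dx = \int_{\mathbb{R}^2} f\psi\,dx$, which is precisely the distributional formulation \eqref{distributional formulation}. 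The local integrability of $f$ needed for this identity to make sense is guaranteed once $\varphi \in L^2_w(\mathbb{R}^2)$, using the explicit decay of $e^{U}$ and formula \eqref{improtant-estimate-1}, exactly as in the setup of Theorem \ref{thm-1}.

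With both hypotheses verified, Theorem \ref{thm-1} applies verbatim and yields all of its conclusions, namely the smoothness and boundedness of $\varphi$, the orthogonality identity \eqref{identity}, the representation \eqref{eq-thm-3}, and the bound \eqref{eq-thm-4}, for the given weak solution. I expect no genuine obstacle here; the only point requiring care is the embedding step, which is where all the analysis resides, together with confirming that the test-function class in the definition of weak solution indeed contains $C_c^\infty(\mathbb{R}^2)$ so that the integration by parts is legitimate.
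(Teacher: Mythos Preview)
Your reduction is correct and matches the paper's approach exactly: the paper simply cites the inclusion $\dot{H}^{1}(\mathbb{R}^{2})\subset L^{2}_{w}(\mathbb{R}^{2})$ from Appendix~\ref{appendix} and then invokes Theorem~\ref{thm-1}, and you do the same while spelling out the (routine) passage from weak to distributional formulation. One small caveat: your parenthetical description of the embedding as the homogeneous estimate $\|\varphi/(1+|x|^{2})\|_{L^{2}}\le C\|\nabla\varphi\|_{L^{2}}$ is not quite right, since nonzero constants lie in $\dot{H}^{1}(\mathbb{R}^{2})$ with vanishing gradient---the Appendix proof indeed produces an extra $|f_{Q_{0}}|$ term on the right, which is harmless for the set-theoretic inclusion you actually need.
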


\begin{Rem}
\begin{enumerate}
    \item From \eqref{slightly subcritical choquard equation} and \eqref{defin-U-xi-lambda}, we immediately deduce that each $\varphi_{j}\in L^{\infty}(\R^{2})\cap C^{\infty}(\R^{2})$ satisfies the linear equation \eqref{eq-2}, where $j=1,2,3$. Theorem \ref{thm-1} further establishes that these $\varphi_{j}$ are all possible solutions to equation \eqref{eq-2}. In particular, the integral identity \eqref{identity}, whose naturalness follows from \eqref{improtant-estimate-2} and \eqref{improtant-estimate-1}, plays a key role in the proof of Theorem \ref{thm-1}, see Proposition \ref{prop-4.5} for more details.

\item As $\alpha\to0$, equation \eqref{eq-2} is formally reduced to 
\begin{equation}\label{eq-alpha-0}
   -\Delta\varphi=\frac{(8/\pi)^{1/2}C_{0,\varphi}}{(1+|x|^{2})^{2}}+\frac{8}{(1+|x|^{2})^{2}}\varphi(x),\ \ \text{in}\ \R^{2},
\end{equation}
where $C_{0,\varphi}:=(8/\pi)^{1/2}\displaystyle{\int_{\R^2}}\frac{1}{(1+|y|^{2})^{2}}\varphi(y)dy$. Given that $\varphi\in L^{2}_{w}(\R^{2})$, the H\"older inequality yields
\begin{equation}
\begin{aligned}
     C_{0,\varphi}\leq (8/\pi)^{1/2}\left(\int_{\R^2} \frac{1}{(1+|y|^{2})^{2}}dy\right)^{\frac{1}{2}}\left(\int_{\R^2} \frac{\varphi^{2}(y)}{(1+|y|^{2})^{2}}dy\right)^{\frac{1}{2}}<+\infty,
\end{aligned}
\end{equation}
which implies that $C_{0,\varphi}$ is well-defined. Moreover, following the arguments analogous to those in Proposition \ref{prop-distributional}--Proposition \ref{prop-integral representation} below, we conclude that $\varphi\in L^{\infty}(\R^{2})\cap C^{\infty}(\R^{2})$ and $C_{0,\varphi}=0$. The equation \eqref{eq-alpha-0} therefore simplifies to
\begin{equation}
     -\Delta\varphi=\frac{8}{(1+|x|^{2})^{2}}\varphi(x),\ \ \text{in}\ \R^{2},
\end{equation}
which together with Theorem D gives that
\begin{equation}
    \varphi=\sum_{j=1}^{2} a_{j}\frac{4x_{j}}{1+|x|^{2}}+b\frac{2(1-|x|^{2})}{1+|x|^{2}},
\end{equation}
for some constants $a_{j},b\in\R$, $j=1,2$. These results match exactly what we obtain by taking the limit $\alpha\to0$ in the conclusion \eqref{eq-2} of Theorem \ref{thm-1}.

\end{enumerate}
\end{Rem}

\noindent\textbf{Methods and difficuliculties.}
Due to the presence of convolution terms, the ordinary differential equations methods used for the local Liouville equation \eqref{Lane-Emden} are no longer applicable. Instead, we employ the method introduced in \cite{lixuemei2023nondegeneracy}, which relies primarily on the spherical harmonic decomposition and the Funk-Hecke formula. However, the situation differs significantly, and the proof requires careful treatment. First, we do not require the solutions to be bounded or classical; instead, we only assume that they belong to $L^{2}_{w}(\mathbb{R}^{2})$ and satisfy equation \eqref{eq-2} in the distributional sense. Indeed, any function $\varphi$ satisfying the growth condition
\[
|\varphi(x)| \leq C(1+|x|^{\tau}) \quad \text{for some} \quad \tau \in [0,1),
\]
automatically belongs to $L^{2}_{w}(\mathbb{R}^{2})$. This relaxed regularity framework requires careful argument. Second, the fundamental solution for the Laplace operator in two dimensions is sign-changing and exhibits markedly different behavior compared to higher-dimensional cases. Moreover, unlike in higher dimensions, solutions to equation \eqref{eq-2} may not decay to zero at infinity. Consequently, the corresponding integral representation formula may contain a non-zero constant term, so several additional technical estimates are required in the proof.



The paper is organized as follows. We review several preliminary results in Section \ref{section-Preliminaries}.  In Section \ref{section-proof-thm-1}, we first establish the regularity properties and derive an integral representation formula for solutions to \eqref{eq-2}. Then by using the stereographic projection and the spherical harmonic decomposition, we prove Theorem \ref{thm-1}. The Appendix \ref{appendix} contains the proof of $\dot{H}^{1}(\mathbb{R}^{2}) \subset L^{2}_{w}(\mathbb{R}^{2})$.

\smallskip

\noindent\textbf{Notations.}
Throughout this paper, we adopt the following notations.
\begin{enumerate} 
     \item  Let $f,g: X\to \R^{+}\cup\{0\}$ be two nonnegative function defined on some set $X$. We write $f\lesssim g$ or $g\gtrsim f$, if there exists a constant $C>0$ independent of $x$ such that $f(x)\leq C g(x)$ for any $x\in X$, and $f\sim g$  means that $f\lesssim g$ and $g\lesssim f$.
     \item The homogeneous Sobolev space $\dot{H}^{1}(\R^{N})$ is defined as
     \begin{equation}
         \dot{H}^{1}(\R^{N}):=\{u\in L^{1}_{loc}(\R^{N}):\nabla u\in L^{2}(\R^{N})\}.
     \end{equation}
     The space $\text{BMO}(\R^{N})$ of bounded mean oscillations is the set of locally integrable functions $f$ such that
     \begin{equation}
         \|f\|_{\text{BMO}(\R^{N})}:=\sup_{Q}\frac{1}{|Q|}\int_{Q}|f(x)-f_{Q}|dx<+\infty\text{~~with~~}f_{Q}:=\frac{1}{Q}\int_{Q}f(x)dx.
     \end{equation}
     The above supremum is taken over the set of Euclidean cubes $Q$.
     \item We define the Japanese bracket $\langle x \rangle := \left(1 + |x|^2\right)^{1/2}$. The unit sphere in $\mathbb{R}^N$ is denoted by $\mathbb{S}^{N-1}$, i.e.,
\begin{equation}
    \mathbb{S}^{N-1}:=\left\{\xi=(\xi_{1},\cdots,\xi_{N})\in\mathbb{R}^{N} \quad\left|\quad\sum_{j=1}^{N}\xi_{j}^{2}=1\right.\right\}.
\end{equation}
\item For any $1\leqslant p<\infty$, we denote by $L^{p}\left(\mathbb{R}^{N}\right)$ and $L^{p}\left(\mathbb{S}^{N}\right)$ the spaces of real-valued $p$-th power integrable functions on $\mathbb{R}^{N}$ and $\mathbb{S}^{N}$. Moreover, we equip $L^{p}\left(\mathbb{R}^{N}\right)$ and $L^{p}\left(\mathbb{S}^{N}\right)$ with the norms:
\begin{equation}
    \left\|f\right\|_{L^{p}(\R^{N})}:=\left(\int_{\mathbb{R}^{N}}|f\left(x\right)|^{p}\,\mathrm{d}x\right)^{\frac{1}{p}},\text{~for~}f\in L^{p}\left(\mathbb{R}^{N}\right)
\end{equation}
and
\begin{equation}
\left\|F\right\|_{L^{p}(\mathbb{S}^{N})}:=\left(\int_{\mathbb{S}^{N}}|F\left(\xi\right)|^{p}\,\mathrm{d}\xi\right)^{\frac{1}{p}},\text{~for~}F\in L^{p}\left(\mathbb{S}^{N}\right),
\end{equation}
where $\mathrm{d}\xi$ is the standard volume element on the sphere $\mathbb{S}^{N}$.
\end{enumerate}

\section{Preliminaries}\label{section-Preliminaries}

First, we recall the Hardy-Littlewood-Sobolev (HLS) inequality \cite{Lieb2001}, a key tool for estimating the nonlocal term.
\begin{Lem} \label{lema 2.1} 
Suppose $N\geq1$, $\alpha\in(0,N)$ and $\theta,\,r>1$ with $\frac{1}{\theta}+\frac{1}{r}+\frac{\alpha}{N}=2$. Let $f\in L^{\theta}(\R^N)$ and $g\in L^{r}(\R^N)$, then there exists a sharp constant $C(\theta,r,\alpha,N)$, independent of $f$ and $g$, such that
\begin{align}\label{HLS}
\displaystyle{\int_{\R^N}}\displaystyle{\int_{\R^N}}\frac{f(x)g(y)}{|x-y|^{\alpha}}dxdy\leq C(\theta,r,\alpha,N)\|f\|_{L^{\theta}(\R^N)}\|g\|_{L^{r}(\R^N)}.
\end{align}
If $\theta=r=\frac{2N}{2N-\alpha}$, then
\begin{equation}\label{definition of C N alpha}
    C(\theta,r,\alpha,N)=C(N,\alpha):=\pi^{\frac{\alpha}{2}}\frac{\Gamma\left(\frac{N-\alpha}{2}\right)}{\Gamma\left(N-\frac{\alpha}{2}\right)}\left(\frac{\Gamma(N)}{\Gamma\left(\frac{N}{2}\right)}\right)^{\frac{N-\alpha}{N}}.
\end{equation}
In this case, the equality in \eqref{HLS} holds if and only if $f\equiv (const.)\, g$, where
$$g(x)=A\left(\frac{1}{\gamma^{2}+|x-a|^{2}}\right)^{\frac{2N-\alpha}{2}},\quad \text{for some $A\in \mathbb{C}$, $0\neq\gamma\in\R$ and $a\in\R^N$.}$$
\end{Lem}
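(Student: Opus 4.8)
The final statement is the sharp Hardy--Littlewood--Sobolev inequality, so the plan is to prove it in two stages: first the existence of a finite best constant for the general exponent pair $(\theta,r)$, and then the sharp value $C(N,\alpha)$ together with the classification of extremizers in the conformal (diagonal) case $\theta=r=\frac{2N}{2N-\alpha}$. Throughout I would first reduce to nonnegative $f,g$ by replacing them with $|f|,|g|$, which only increases the left-hand side of \eqref{HLS} while leaving the norms unchanged.

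For the existence of \eqref{HLS} with \emph{some} finite constant, I would note that the bilinear estimate is equivalent, by duality, to the boundedness of the Riesz potential $g\mapsto\int_{\R^N}|x-y|^{-\alpha}g(y)\,dy$ from $L^{r}(\R^N)$ to $L^{\theta'}(\R^N)$, where $\frac{1}{\theta'}=1-\frac{1}{\theta}=\frac{1}{r}-\frac{N-\alpha}{N}$ by the constraint $\frac1\theta+\frac1r+\frac\alpha N=2$. This is the Sobolev fractional-integration theorem. I would prove it by splitting the kernel as $|x-y|^{-\alpha}=|x-y|^{-\alpha}\mathbf{1}_{|x-y|<\rho}+|x-y|^{-\alpha}\mathbf{1}_{|x-y|\ge\rho}$, controlling the near part by the Hardy--Littlewood maximal function and the far part by H\"older's inequality, optimizing in $\rho$ to obtain a weak-type bound, and upgrading to the strong estimate via the Marcinkiewicz interpolation theorem. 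This yields finiteness of the best constant $C(\theta,r,\alpha,N)$ but not its value.

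For the sharp constant and the extremizers in the case $\theta=r=\frac{2N}{2N-\alpha}=:p$, write $I(f):=\int_{\R^N}\int_{\R^N}\frac{f(x)f(y)}{|x-y|^{\alpha}}\,dx\,dy$. I would first invoke the Riesz rearrangement inequality: since $|x-y|^{-\alpha}$ is a strictly symmetric-decreasing kernel, replacing $f,g$ by their symmetric-decreasing rearrangements $f^{*},g^{*}$ does not decrease the left-hand side of \eqref{HLS} while preserving the norms, so it suffices to treat radial decreasing functions, and by the symmetry of the problem one may further reduce to $f\equiv(\mathrm{const})\,g$. The decisive structural fact is that the normalized functional $I(f)/\|f\|_p^2$ is invariant under the full conformal group of $\R^N\cup\{\infty\}\cong\mathbb{S}^{N}$; concretely, under translations, dilations, and the inversion $f\mapsto|x|^{-(2N-\alpha)}f(x/|x|^2)$. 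The cleanest rigorous route is the competing-symmetries argument of Carlen--Loss (yielding Lieb's sharp constant): one composes the symmetric-decreasing rearrangement $R$, which strictly increases $I/\|\cdot\|_p^2$ unless $f$ is already symmetric decreasing about its center, with a fixed conformal map $U$ arising from stereographic projection to $\mathbb{S}^N$, which preserves the functional, and shows that the iterates $(RU)^k f$ converge after normalization to the unique joint fixed point $h(x)=(1+|x|^2)^{-(2N-\alpha)/2}$. This forces any extremizer to be a conformal image of $h$, i.e. of the stated form $A(\gamma^2+|x-a|^2)^{-(2N-\alpha)/2}$. Equivalently, transferring the problem to $\mathbb{S}^N$ turns it into a rotation-invariant extremal problem whose maximizers are the constants, and pulling back recovers the same family.

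Finally, to obtain the explicit value of $C(N,\alpha)$ I would insert $h$ and evaluate the ratio $I(h)/\|h\|_p^2$. The key simplification is that $h$ satisfies its Euler--Lagrange equation $\int_{\R^N}|x-y|^{-\alpha}h(y)\,dy=c\,h(x)^{p-1}$ for a constant $c$ (a classical Riesz-potential identity, since $h^{p-1}(x)=(1+|x|^2)^{-\alpha/2}$), whence $I(h)=c\int_{\R^N}h^{p}\,dx=c\|h\|_p^p$ and the ratio reduces to $c\,\|h\|_p^{p-2}$. Both $c$ and $\|h\|_p^p=\int_{\R^N}(1+|x|^2)^{-N}\,dx$ are evaluated by elementary radial Beta/Gamma integrals, and collecting the Gamma factors yields $C(N,\alpha)=\pi^{\alpha/2}\frac{\Gamma(\frac{N-\alpha}{2})}{\Gamma(N-\frac{\alpha}{2})}\big(\frac{\Gamma(N)}{\Gamma(N/2)}\big)^{\frac{N-\alpha}{N}}$. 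I expect the main obstacle to be the rigidity part, namely the characterization of equality and the determination of the sharp constant: the existence of a finite constant is routine Calder\'on--Zygmund and interpolation theory, whereas pinning down the extremizers requires exploiting the conformal invariance in an essential way through the competing-symmetries (or sphere-symmetrization) machinery.
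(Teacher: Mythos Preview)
Your outline is a faithful sketch of the standard proof of the sharp Hardy--Littlewood--Sobolev inequality (Riesz rearrangement plus the Carlen--Loss competing-symmetries argument, followed by the Gamma-integral evaluation), and I see no genuine gap in it. However, the paper does \emph{not} prove this lemma at all: it is stated in the Preliminaries section as a recalled fact, with the sentence ``we recall the Hardy-Littlewood-Sobolev (HLS) inequality \cite{Lieb2001}'' and no accompanying proof. The result is simply cited from Lieb--Loss, \emph{Analysis}, and used as a black-box tool for the nonlocal estimates in Section~3.

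So the comparison is not between two proof strategies but between a full proof sketch (yours) and a citation (the paper's). Your approach is essentially the one found in the cited reference, so in that sense there is no divergence in mathematical content; the paper just defers the argument to the literature, as is customary for a well-known preliminary inequality.
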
 



Next, we present a useful integral estimate established in \cite{lixuemei2023nondegeneracy}.
\begin{Lem}\label{lemma:integral_estimate}
    Let $N\geq1$ $\lambda \in (0, N)$ and $\theta + \lambda > N$. Then
   \begin{equation}
       \int_{\R^N} \frac{1}{|x-y|^\lambda} \frac{1}{\langle{y}\rangle^{\theta}} \, \mathrm{d}y 
    \lesssim 
    \begin{cases}
        \langle{x}\rangle^{N-\lambda-\theta}, & \text{if } \theta < N, \\
         \langle{x}\rangle^{-\lambda} \left(1 + \log  \langle{x}\rangle\right), & \text{if } \theta = N, \\
        \ \langle{x}\rangle^{-\lambda}, & \text{if } \theta > N.
    \end{cases}
   \end{equation}
\end{Lem}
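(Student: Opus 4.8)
The plan is to prove this as a classical convolution estimate via a region decomposition of $\R^N$ governed by the relative sizes of $|x-y|$, $|y|$, and $\langle x\rangle$. First I would dispose of the bounded regime $|x|\le 1$: there $\langle x\rangle\sim 1$, so each right-hand side is comparable to a constant, while the integral $I(x):=\int_{\R^N}|x-y|^{-\lambda}\langle y\rangle^{-\theta}\,dy$ is bounded uniformly over $\{|x|\le1\}$, since the singularity at $y=x$ is integrable ($\lambda<N$) and the tail as $|y|\to\infty$ converges ($\lambda+\theta>N$), both uniformly in such $x$. So it suffices to treat $|x|=:R\ge1$, where $\langle x\rangle\sim R$. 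Note also that $\lambda<N$ together with $\lambda+\theta>N$ forces $\theta>N-\lambda>0$, so $\langle y\rangle^{-\theta}$ genuinely decays.

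For $R\ge1$ I split $\R^N$ into three regions: (A) $|y|\le R/2$, (B) $|x-y|\le R/2$, and (C) the complement, where both $|y|>R/2$ and $|x-y|>R/2$. In region A one has $|x-y|\ge |x|-|y|\ge R/2$, so $|x-y|^{-\lambda}\lesssim R^{-\lambda}$, and the contribution is controlled by $R^{-\lambda}\int_{|y|\le R/2}\langle y\rangle^{-\theta}\,dy$; passing to polar coordinates, $\int_0^{R/2}s^{N-1}(1+s)^{-\theta}\,ds$ is $\sim 1$, $\sim\log R$, or $\sim R^{N-\theta}$ according as $\theta>N$, $\theta=N$, or $\theta<N$. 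This reproduces exactly the three claimed bounds $R^{-\lambda}$, $R^{-\lambda}\log R$, and $R^{N-\lambda-\theta}$, so region A is the one carrying the sharp growth.

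In region B one has $|y|\ge |x|-|x-y|\ge R/2$, hence $\langle y\rangle\gtrsim R$, and the contribution is $\lesssim R^{-\theta}\int_{|z|\le R/2}|z|^{-\lambda}\,dz\sim R^{-\theta}R^{N-\lambda}=R^{N-\lambda-\theta}$, using $\lambda<N$. For region C the key observation is that $|x-y|\gtrsim\langle y\rangle$ throughout: if $|y|\ge 2R$ then $|x-y|\ge|y|-R\ge|y|/2\sim\langle y\rangle$, while if $R/2<|y|<2R$ then $\langle y\rangle\sim R$ and $|x-y|>R/2\gtrsim\langle y\rangle$. Hence $|x-y|^{-\lambda}\lesssim\langle y\rangle^{-\lambda}$ and the contribution is $\lesssim\int_{|y|>R/2}\langle y\rangle^{-(\lambda+\theta)}\,dy\sim R^{N-\lambda-\theta}$, again because $\lambda+\theta>N$. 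Thus regions B and C both contribute $O(R^{N-\lambda-\theta})$, which is $\le$ the asserted bound in every case: it equals it when $\theta<N$, and when $\theta\ge N$ the exponent $N-\lambda-\theta\le-\lambda$ makes it absorbed into $R^{-\lambda}$ (respectively $R^{-\lambda}\log R$). Summing the three regions yields the lemma.

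The computations are elementary, so there is no deep obstacle; the only points demanding care are the annular sub-case $R/2<|y|<2R$ of region C, where $|x-y|\gtrsim\langle y\rangle$ must be checked directly rather than by a crude triangle-inequality split, and the bookkeeping verifying that the uniform $R^{N-\lambda-\theta}$ bound from regions B and C is dominated by the stated right-hand side in each of the three cases.
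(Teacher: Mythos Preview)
Your argument is correct: the three-region decomposition (near the origin, near $x$, and far from both) together with the elementary radial integrals handles every case, and your bookkeeping that the contributions from regions B and C are always dominated by the region-A bound is accurate. Note, however, that the paper does not actually supply its own proof of this lemma; it merely quotes the statement from \cite{lixuemei2023nondegeneracy}, so there is no in-paper argument to compare against. Your proof is the standard one and would be perfectly acceptable as a self-contained justification here.
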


To state the integral representation formula for solutions to \eqref{eq-2}, we introduce the operator $K$ on $L^{1}(\R^{2})$ given by
\begin{equation}\label{defin-K}
    K(f)(x):=\frac{1}{2\pi}\int_{\R^{2}}\log\left(\frac{1+|y|}{|x-y|}\right)f(y)dy.
\end{equation}
Furthermore, we recall the following properties of the operator $K$ established in \cite{Gluck2025dcds}.

\begin{Lem}\label{lem-K-2}
If \(f\in L^{1}(\mathbb{R}^{2})\) then \(Kf\), as defined in \eqref{defin-K}, satisfies \(-\Delta(Kf)=f\) in the sense of distributions on \(\mathbb{R}^{2}\).
\end{Lem}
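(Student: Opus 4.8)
The plan is to verify the distributional identity $-\int_{\R^2}(Kf)\Delta\varphi=\int_{\R^2} f\varphi$ for every $\varphi\in C_c^\infty(\R^2)$ directly, exploiting that the kernel $G(x,y):=\frac{1}{2\pi}\log\frac{1+|y|}{|x-y|}$ bundles the two logarithms together precisely so that one never needs $\log(1+|\cdot|)$ or $\log|\cdot|$ to be separately integrable against $f$. First I would check that $Kf\in L^{1}_{loc}(\R^2)$, so that the left-hand pairing is well defined. This follows from a two-regime bound on $G$: for $y$ near $x$ the factor $-\log|x-y|$ produces only an integrable logarithmic singularity, while for $|y|$ large relative to $x$ the quotient $\frac{1+|y|}{|x-y|}$ is pinched between two positive constants, so $G(x,\cdot)$ is bounded there. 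Integrating $|G(x,y)|\,|f(y)|$ over these two regions shows $Kf(x)$ is finite for a.e. $x$ and locally integrable.

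Next I would justify Fubini. Fix $\varphi\in C_c^\infty(\R^2)$ with $\operatorname{supp}\varphi\subset B_R$ and estimate $\iint_{\R^2\times\R^2}|G(x,y)|\,|f(y)|\,|\Delta\varphi(x)|\,dx\,dy$ by splitting the $y$-integral into $\{|y|\le 2R\}$ and $\{|y|>2R\}$. On the first region $\log(1+|y|)$ is bounded and $\int_{B_R}|\log|x-y||\,dx$ is bounded uniformly in $y$, so the contribution is controlled by $\|\Delta\varphi\|_{\infty}\int_{B_{2R}}|f|<+\infty$; on the second region, $x\in B_R$ forces $\tfrac12|y|\le|x-y|\le 2|y|$, whence $|G(x,y)|\le C_R$ and the contribution is at most $C_R\|\Delta\varphi\|_{L^1}\int_{|y|>2R}|f|<+\infty$. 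Absolute convergence then permits exchanging the order of integration.

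The core step is the inner $x$-integral for fixed $y$, namely $I(y):=\int_{\R^2}G(x,y)\Delta\varphi(x)\,dx$. Writing $G(x,y)=\frac{1}{2\pi}\log(1+|y|)-\frac{1}{2\pi}\log|x-y|$ is now legitimate, since both pieces are integrable against the compactly supported $\Delta\varphi$. The first piece contributes $\frac{1}{2\pi}\log(1+|y|)\int_{\R^2}\Delta\varphi\,dx=0$ by the divergence theorem, and the second gives $-\varphi(y)$ via the fundamental-solution identity $\int_{\R^2}\bigl(-\frac{1}{2\pi}\log|x-y|\bigr)\Delta\varphi(x)\,dx=-\varphi(y)$. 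Hence $I(y)=-\varphi(y)$, and after Fubini $-\int_{\R^2}(Kf)\Delta\varphi=-\int_{\R^2} f(y)I(y)\,dy=\int_{\R^2} f\varphi$, as required. The main obstacle, and the very reason the statement is phrased with the combined kernel, is exactly that the global splitting of $G$ into its two logarithms is illegitimate for $f$ merely in $L^{1}$; the argument therefore keeps $G$ intact throughout the Fubini estimate and performs the splitting only inside the localized $x$-integral, where $\int_{\R^2}\Delta\varphi=0$ cleanly annihilates the $\log(1+|y|)$ term.
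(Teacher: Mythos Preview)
Your argument is correct. The paper does not prove this lemma at all; it is recalled from \cite{Gluck2025dcds} without proof, so there is no in-paper argument to compare against. Your direct verification---showing $Kf\in L^1_{\mathrm{loc}}$, justifying Fubini via the two-region estimate on $G(x,y)$, and then splitting the kernel only inside the compactly supported $x$-integral so that $\int\Delta\varphi=0$ kills the $\log(1+|y|)$ piece while the fundamental-solution identity handles $\log|x-y|$---is exactly the standard route and is carried out cleanly. One minor remark: in the Fubini estimate for $|y|>2R$, the bound $\frac{1+|y|}{|x-y|}\le C_R$ uses $|y|-R\ge|y|/2$ and $\frac{1+|y|}{|y|}\le 1+\frac{1}{2R}$, so the constant genuinely depends on $R$; you state this, but it is worth being explicit that this is why $C_R$ cannot be taken absolute.
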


\begin{Lem}\label{lem-K-1}
Let \(K\) be the operator defined in \eqref{defin-K}. Then 
\begin{enumerate}[label=(\arabic*),font=\upshape]
    \item If \(f\in L^{1}(\mathbb{R}^{2})\) then \(Kf\in W_{{loc}}^{1,1}(\mathbb{R}^{2})\) and for every \(i\in\{1,2\}\) the following equality holds in the sense of \(L^{1}_{{loc}}(\mathbb{R}^{2})\):
    \[
    \partial_{i}Kf(x)=-\frac{1}{2\pi}\int_{\mathbb{R}^{2}}\frac{x_{i}-y_{i}}{|x-y|^{2}}f(y)\;\mathrm{d}y.
    \]

    \item If \(f\in L^{1}(\mathbb{R}^{2})\cap L^{p}_{{loc}}(\mathbb{R}^{2})\) for some \(p>2\), then \(Kf\in W_{{loc}}^{1,\infty}(\mathbb{R}^{2})\).

    \item If \(f\in L^{1}(\mathbb{R}^{2})\cap L^{\infty}(\mathbb{R}^{2})\) then \(Kf\in C^{1}(\mathbb{R}^{2})\).
\end{enumerate}  
\end{Lem}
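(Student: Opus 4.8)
The plan is to exploit the additive structure of the kernel: since $\log\frac{1+|y|}{|x-y|}=\log(1+|y|)-\log|x-y|$, the first summand is independent of $x$, so $K$ differs from the two-dimensional logarithmic (Newtonian) potential $Nf(x):=-\frac{1}{2\pi}\int_{\R^2}\log|x-y|\,f(y)\,dy$ only by a term that, for each fixed $y$, is constant in $x$. The price is that neither $\log(1+|y|)$ nor $-\log|x-y|$ is separately integrable against $L^1$ data at infinity; it is precisely their combination $\log\frac{1+|y|}{|x-y|}=O\big((1+|x|)/|y|\big)$ as $|y|\to\infty$ that decays, so every estimate must be carried out on the full kernel rather than on the two pieces. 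I would first record two elementary Tonelli bounds on an arbitrary ball $B_R$: (i) $\int_{B_R}\big|\log\frac{1+|y|}{|x-y|}\big|\,dx\lesssim_R 1$ uniformly in $y\in\R^2$, obtained by treating $|y|\le 2R$ (bounded $\log(1+|y|)$, integrable log singularity) and $|y|>2R$ (there $\frac{1+|y|}{|x-y|}$ lies between two fixed positive constants for $x\in B_R$) separately; and (ii) $\int_{B_R}\frac{1}{|x-y|}\,dx\le 4\pi R$ uniformly in $y$. Combined with $f\in L^1(\R^2)$ and Tonelli's theorem, (i) shows $Kf$ is finite a.e.\ and $Kf\in L^1_{loc}(\R^2)$, while (ii) shows the candidate gradient $V_i(x):=-\frac{1}{2\pi}\int_{\R^2}\frac{x_i-y_i}{|x-y|^2}f(y)\,dy$ lies in $L^1_{loc}(\R^2)$.

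For part (1) I would verify the weak-derivative identity directly. Fix $\psi\in C_c^\infty(\R^2)$ and $i\in\{1,2\}$; bound (i) together with the compact support of $\psi$ justifies Fubini in $\int_{\R^2}Kf\,\partial_i\psi\,dx=\frac{1}{2\pi}\int_{\R^2}f(y)\big(\int_{\R^2}\log\frac{1+|y|}{|x-y|}\,\partial_i\psi(x)\,dx\big)dy$. In the inner integral the constant-in-$x$ term drops, since $\int_{\R^2}\partial_i\psi\,dx=0$, leaving $-\int_{\R^2}\log|x-y|\,\partial_i\psi(x)\,dx$. Here I invoke the classical fact that $\log|\cdot|\in L^1_{loc}(\R^2)$ has distributional gradient $z/|z|^2$ (proved by excising $B_\varepsilon(y)$ and letting the $O(\varepsilon\log\varepsilon)$ boundary term vanish), which turns the inner integral into $\int_{\R^2}\frac{x_i-y_i}{|x-y|^2}\psi(x)\,dx$. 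Undoing Fubini (again legitimate by (ii)) yields $\int_{\R^2}Kf\,\partial_i\psi\,dx=-\int_{\R^2}V_i\,\psi\,dx$, i.e.\ $\partial_iKf=V_i$ in $L^1_{loc}$, whence $Kf\in W^{1,1}_{loc}(\R^2)$.

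For part (2), with $f\in L^1\cap L^p_{loc}$ and $p>2$ (so the conjugate $p'<2$), I would estimate $|\partial_iKf(x)|\le\frac{1}{2\pi}\int_{\R^2}\frac{|f(y)|}{|x-y|}\,dy$ by splitting at $|x-y|=1$: on $B_1(x)$, Hölder gives $\le\|\,|\cdot|^{-1}\|_{L^{p'}(B_1)}\|f\|_{L^p(B_1(x))}$, which is finite and locally bounded because $|\cdot|^{-1}\in L^{p'}(B_1)$ exactly when $p'<2$; on the complement $\frac{1}{|x-y|}\le 1$ gives $\le\|f\|_{L^1}$. An identical near/far split controls $|Kf(x)|$ (using that the log singularity lies in every $L^{p'}(B_1)$ and that the kernel is bounded for $|x-y|\ge 1$, the far part handled by (i)), so $Kf,\nabla Kf\in L^\infty_{loc}$, i.e.\ $Kf\in W^{1,\infty}_{loc}(\R^2)$. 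For part (3), with $f\in L^1\cap L^\infty$, it suffices to prove $V_i$ continuous. I would write $V_i=-\frac{1}{2\pi}k_i*f$ with $k_i(z)=z_i/|z|^2$ and split $k_i=k_i\mathbf{1}_{B_1}+k_i\mathbf{1}_{B_1^c}$: the first factor is in $L^1(\R^2)$, so its convolution with $f\in L^\infty$ is uniformly continuous via $\|\tau_h(k_i\mathbf{1}_{B_1}*f)-k_i\mathbf{1}_{B_1}*f\|_\infty\le\|\tau_h(k_i\mathbf{1}_{B_1})-k_i\mathbf{1}_{B_1}\|_{L^1}\|f\|_\infty\to 0$; the second factor is bounded by $1$ and continuous off the null set $\{|z|=1\}$, so dominated convergence (dominating function $|f|\in L^1$) gives continuity of $k_i\mathbf{1}_{B_1^c}*f$. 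Thus $V_i\in C^0$; since $f\in L^\infty\subset L^p_{loc}$ for every $p>2$, part (2) already furnishes a continuous representative of $Kf$, and its distributional gradient equals the continuous field $V=(V_1,V_2)$, so the standard lemma (a continuous function with continuous distributional gradient is $C^1$) gives $Kf\in C^1(\R^2)$.

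The main obstacle is the non-integrability of the kernel at infinity: because $\log\frac{1+|y|}{|x-y|}$ and $\frac{x_i-y_i}{|x-y|^2}$ are merely bounded (decaying like $1/|y|$) rather than integrable in $y$, none of the pieces can be handled by naive absolute convergence, and the tempting splitting into $\log(1+|y|)$ and $-\log|x-y|$ fails termwise. Every interchange of integrals and every passage to the limit must therefore be justified on the combined kernel through the uniform local bounds (i)–(ii); the single most delicate point is the integration by parts against the singular logarithm in part (1), where both the cancellation of the $x$-independent term and the vanishing of the $O(\varepsilon\log\varepsilon)$ boundary contribution near $y=x$ must be tracked.
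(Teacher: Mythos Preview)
The paper does not supply its own proof of this lemma; it is quoted from \cite{Gluck2025dcds} (note the sentence immediately preceding the statement: ``we recall the following properties of the operator $K$ established in \cite{Gluck2025dcds}''). There is therefore no in-paper argument to compare against.

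Your outline is a correct and self-contained proof. The uniform local bounds (i) and (ii) are precisely what is needed to run Fubini in both directions in part~(1), and the observation that the $x$-independent term $\log(1+|y|)$ is annihilated by $\int\partial_i\psi=0$ reduces the inner integral to the classical distributional identity $\nabla\log|\cdot|=z/|z|^2$. The near/far splitting in part~(2), with H\"older against $|\cdot|^{-1}\in L^{p'}(B_1)$ on the near piece and the $L^1$ bound on the far piece, is standard and correct. In part~(3) the decomposition $k_i=k_i\mathbf{1}_{B_1}+k_i\mathbf{1}_{B_1^c}$, combined with $L^1$-translation continuity for the first piece (against $f\in L^\infty$) and dominated convergence for the second (against $f\in L^1$), yields continuity of $\nabla Kf$, and the passage to $Kf\in C^1$ via the Lipschitz representative from part~(2) is legitimate.

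One minor remark on part~(2): when you bound $|Kf(x)|$ on the far region $\{|x-y|\ge 1\}$, the reference to bound (i) is slightly oblique, since (i) integrates in $x$ rather than $y$. What you actually use is the pointwise estimate $\sup_{x\in B_R,\ |x-y|\ge 1}\bigl|\log\tfrac{1+|y|}{|x-y|}\bigr|\le C_R$, which follows from the same case split ($|y|\le 2R$ versus $|y|>2R$) that you already carried out in establishing (i); you may wish to make this explicit.
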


Finally, we define the stereographic projection $\mathcal{S}:\R^{2}\to\mathbb{S}^{2}\setminus\{(0,0,-1)\}$ by
\begin{equation}
    \mathcal{S}(x):=\left(\frac{2x}{1+|x|^{2}},\frac{1-|x|^{2}}{1+|x|^{2}}\right),
\end{equation}
with its inverse $\mathcal{S}^{-1}:\mathbb{S}^{2}\setminus\{(0,0,-1)\}\to\R^{2}$ given by
\begin{equation}
    \begin{aligned}\label{defin-S--1}
        \mathcal{S}^{-1}(\xi_{1},\xi_{2},\xi_{3}):=\left(\frac{\xi_{1}}{1+\xi_{3}},\frac{\xi_{2}}{1+\xi_{3}}\right).
    \end{aligned}
\end{equation}
Setting $\rho(x):=\left(\frac{2}{1+|x|^{2}}\right)^{\frac{1}{2}}$, we recall from\cite{Frank-2012,Lieb2001} that
\begin{equation}\label{eq-S-x-S-y-x-y}
   g_{ij}=\rho^{4}(x)\delta_{ij},\quad|\mathcal{S}x-\mathcal{S}y|=|x-y|\rho(x)\rho(y)\text{~and~}d\xi=\rho^{4}(x)dx,
\end{equation}
where $g_{ij}\left(1\leqslant i,\,j\leqslant 3\right)$ stands for the metric on $\mathbb{S}^{2}$, which is inherited from $\mathbb{R}^{3}$. 
Therefore, for any $F\in L^{1}(\mathbb{S}^{2})$, we have the following identity
\begin{equation}\label{identity-integral}
    \int_{\mathbb{S}^{2}}F(\xi)d\xi=\int_{\R^{2}}F(\mathcal{S}x)\rho^{4}(x)dx.
\end{equation}

To establish connections between functions on $\mathbb{R}^2$ and $\mathbb{S}^2$, we compose the functions in $\R^{2}$ and $\mathbb{S}^{2}$ with the maps $\mathcal{S}_{*}$ and $\mathcal{S}^{*}$. For any $f:\R^{2}\to\R$, we denote the weighted pushforward map $\mathcal{S}_{*}f:\mathbb{S}^{2}\setminus\{0,0,-1\}\to \R$ by 
\begin{equation}\label{defin-S-*}
    \mathcal{S}_{*}f(\xi)=f(\mathcal{S}^{-1}\xi),
\end{equation}
and for any $F:\mathbb{S}^{2}\setminus\{0,0,-1\}\to\R$, we denote the weighted pullback map $\mathcal{S}^{*}F:\R^{2}\to\R$ by 
\begin{equation}
    \mathcal{S}^{*}F(x):=F(\mathcal{S}x).
\end{equation}

We now introduce spherical harmonic functions, which are eigenfunctions of the Laplace-Beltrami operator on $\mathbb{S}^2$ (see \cite{Atkinson2012, Dai-Xu-2013}). These admit the following orthogonal decomposition:
\begin{equation}\label{orthogonal decomposition}
L^2(\mathbb{S}^2) = \bigoplus_{k=0}^{\infty} \mathcal{H}_k, 
\end{equation}
where $\mathcal{H}_k $ ($ k \geq 0$) denote the mutually orthogonal subspaces of the restriction on $ \mathbb{S}^2$ of real, homogeneous harmonic polynomials of degree $k$, and
\begin{equation}
    \dim \mathcal{H}_k= 2k+1.
\end{equation}
We will use $\{ Y_{k,j} \mid k\geq0\text{~and~}1 \leqslant j \leqslant \dim \mathcal{H}_k \}$ to denote an orthonormal basis of $\mathcal{H}_k $. In particular, the first-order harmonics are given by
\begin{equation}
    Y_{1,j}(\xi) = \sqrt{\dfrac{3}{2\pi}} \xi_j, \quad 1 \leqslant j \leqslant 3,
\end{equation}
and
\begin{equation}\label{defin-H-1}
   \mathcal{H}_1 = \text{span}\{ \xi_j \mid 1 \leqslant j \leqslant 3 \}.
\end{equation}

Furthermore, the Funk-Hecke formula \cite{Atkinson2012, Dai-Xu-2013} yields the following estimates for spherical harmonics.

\begin{Lem}\label{lem-2.4}
Let $\alpha \in (0, 2)$, the integer $k \geqslant 0 $ and $Y \in \mathcal{H}_k$. Then we have
\begin{equation}
\int_{\mathbb{S}^2} \dfrac{1}{|\xi - \eta|^\alpha} Y(\eta) d\eta = \mu_k(\alpha) Y(\xi)\text{~and~}
        \int_{\mathbb{S}^{2}}\log(|\xi-\eta|)Y(\eta)d\eta=\tilde{\mu}_{k} Y(\xi),
    \end{equation}
where
\begin{equation}\label{defin-mu-k-alpha}
  \mu_k(\alpha) = 2^{2-\alpha} \pi \dfrac{\Gamma\left(k+ \frac{\alpha}{2} \right) \Gamma\left(\frac{2-\alpha}{2} \right)}{\Gamma\left( \frac{\alpha}{2} \right) \Gamma\left( k +2- \frac{ \alpha}{2} \right)}\text{~and~}
     \tilde{\mu}_{k}:=\begin{cases}
            2\pi(\log2-1),&\text{~for~}k=0,\\
            -\frac{2\pi}{k(k+1)},&\text{~for~}k\geq 1.\\
        \end{cases}
\end{equation}
In particular,
\begin{equation}
\mu_0(\alpha)=\frac{2^{3-\alpha}\pi}{2-\alpha}\text{~and~}\mu_1(\alpha) = \frac{2^{3-\alpha}\pi\alpha}{(2-\alpha)(4-\alpha)}.    
\end{equation}
\end{Lem}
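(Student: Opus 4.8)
The plan is to derive both eigenvalue relations from a single source, the Funk--Hecke formula on $\mathbb{S}^2$. The starting observation is that both kernels depend on $\xi,\eta\in\mathbb{S}^2$ only through the inner product $t:=\langle\xi,\eta\rangle\in[-1,1]$, since $|\xi-\eta|^2=|\xi|^2-2\langle\xi,\eta\rangle+|\eta|^2=2(1-t)$, so that $|\xi-\eta|=\sqrt{2(1-t)}$. Consequently $|\xi-\eta|^{-\alpha}=2^{-\alpha/2}(1-t)^{-\alpha/2}$ and $\log|\xi-\eta|=\tfrac12\log 2+\tfrac12\log(1-t)$. For a zonal kernel $g(\langle\xi,\eta\rangle)$ with $g$ integrable on $(-1,1)$, the Funk--Hecke formula in dimension three (where the relevant Gegenbauer polynomial of index $1/2$ is precisely the Legendre polynomial $P_k$, normalised by $P_k(1)=1$) states that every $Y\in\mathcal{H}_k$ is an eigenfunction:
\[
\int_{\mathbb{S}^2}g(\langle\xi,\eta\rangle)\,Y(\eta)\,d\eta=\Big(2\pi\int_{-1}^{1}g(t)\,P_k(t)\,dt\Big)Y(\xi).
\]
Both claimed identities then reduce to evaluating one-dimensional integrals of $g$ against $P_k$, and the factor $2\pi=|\mathbb{S}^1|$ is exactly the normalisation appearing in the statement.

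For the Riesz-type kernel I would take $g(t)=2^{-\alpha/2}(1-t)^{-\alpha/2}$; since $\alpha\in(0,2)$ we have $-\alpha/2\in(-1,0)$, so $g$ is integrable near $t=1$ and the formula applies, giving $\mu_k(\alpha)=2\pi\,2^{-\alpha/2}\int_{-1}^{1}(1-t)^{-\alpha/2}P_k(t)\,dt$. The core computation is the classical moment integral $\int_{-1}^{1}(1-t)^{\sigma}P_k(t)\,dt$ with $\sigma=-\alpha/2$, which I would evaluate by inserting the Rodrigues formula $P_k(t)=\frac{1}{2^k k!}\frac{d^k}{dt^k}(t^2-1)^k$ and integrating by parts $k$ times; all boundary terms vanish because $(t^2-1)^k$ has zeros of order $k$ at $t=\pm1$, and the surviving integral is a Beta integral $\int_{-1}^1(1-t)^{\sigma}(1+t)^{k}\,dt=2^{\sigma+k+1}\frac{\Gamma(k+1)\Gamma(\sigma+1)}{\Gamma(k+\sigma+2)}$. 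This produces a ratio of Gamma functions; a short manipulation using $\Gamma(z+1)=z\Gamma(z)$ together with the factorisation $\Gamma(1-\frac{\alpha}{2})/\Gamma(1-\frac{\alpha}{2}-k)=(-1)^k\Gamma(k+\frac{\alpha}{2})/\Gamma(\frac{\alpha}{2})$ turns it into the stated closed form $\mu_k(\alpha)=2^{2-\alpha}\pi\,\frac{\Gamma(k+\frac{\alpha}{2})\,\Gamma(\frac{2-\alpha}{2})}{\Gamma(\frac{\alpha}{2})\,\Gamma(k+2-\frac{\alpha}{2})}$. Specialising $k=0,1$ and again using the shift identity recovers $\mu_0(\alpha)=2^{3-\alpha}\pi/(2-\alpha)$ and $\mu_1(\alpha)=2^{3-\alpha}\pi\alpha/((2-\alpha)(4-\alpha))$.

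For the logarithmic kernel I would take $g(t)=\tfrac12\log2+\tfrac12\log(1-t)$. When $k\ge1$ the constant $\tfrac12\log2$ contributes nothing because $\int_{-1}^1 P_k=0$ by orthogonality to $P_0\equiv1$, so $\tilde\mu_k=\pi\int_{-1}^{1}\log(1-t)P_k(t)\,dt$. To evaluate this I would avoid series expansions and instead use the Legendre equation $\frac{d}{dt}\big[(1-t^2)P_k'(t)\big]=-k(k+1)P_k(t)$, substitute it for $P_k$, and integrate by parts twice: the first step moves the derivative onto $\log(1-t)$ and, after simplifying $(1-t^2)/(1-t)=1+t$, leaves $-\tfrac{1}{k(k+1)}\int_{-1}^{1}(1+t)P_k'(t)\,dt$; the second step evaluates $\int_{-1}^1(1+t)P_k'(t)\,dt=[(1+t)P_k(t)]_{-1}^1-\int_{-1}^1P_k\,dt=2P_k(1)=2$. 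All boundary contributions vanish since $(1-t)\log(1-t)\to0$ as $t\to1$ and $1-t^2$ vanishes at $t=\pm1$. Hence $\int_{-1}^1\log(1-t)P_k(t)\,dt=-2/(k(k+1))$ and $\tilde\mu_k=-2\pi/(k(k+1))$ for $k\ge1$, while for $k=0$ the remaining integral follows directly from $\int_{-1}^1\log(1-t)\,dt=2(\log2-1)$.

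The main obstacle I anticipate is not conceptual but the careful bookkeeping in the power-kernel case: justifying the vanishing of every boundary term in the repeated integration by parts (equivalently, controlling the improper singularity at $t=1$ uniformly in $k$, which is where $\alpha<2$ is used), and then executing the Gamma-function simplification so that the alternating sign $(-1)^k$ produced by the Rodrigues computation cancels against the one hidden in $\Gamma(1-\frac{\alpha}{2})/\Gamma(1-\frac{\alpha}{2}-k)$, leaving the manifestly positive quotient in the statement. A secondary point requiring attention is the precise normalisation constant $2\pi$ and the identification of the degree-$k$ Gegenbauer polynomial with $P_k$ in dimension three, since this is exactly what fixes the overall constants and, in particular, the value of the logarithmic eigenvalue at $k=0$.
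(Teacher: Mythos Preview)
Your approach is correct and is precisely the standard route: the paper itself does not give a proof of this lemma but simply attributes it to the Funk--Hecke formula in \cite{Atkinson2012,Dai-Xu-2013}, so your outline supplies what the paper omits. The reduction to a zonal integral on $[-1,1]$, the Rodrigues/Beta evaluation for the Riesz kernel, and the Legendre-equation integration by parts for the logarithmic kernel are all sound, including your check that every boundary term is $O((1-t)^{1+\sigma})$ with $\sigma=-\alpha/2>-1$.

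One point worth flagging: if you carry your $k=0$ computation through, you get
\[
\tilde\mu_0=2\pi\Big(\tfrac12\log 2\cdot 2+\tfrac12\cdot 2(\log 2-1)\Big)=2\pi(2\log 2-1),
\]
not $2\pi(\log 2-1)$ as printed in the statement; a direct evaluation of $\int_{\mathbb S^2}\log|\xi-\eta|\,d\eta$ (or differentiating $\mu_0(\alpha)=2^{3-\alpha}\pi/(2-\alpha)$ at $\alpha=0$) confirms your value. This is a harmless typo in the paper, since $\tilde\mu_0$ is never used downstream: the $k=0$ mode is eliminated in Proposition~\ref{prop-4.5} via the integral identity $\int_{\mathbb R^2}\mathfrak N(\varphi)=0$ rather than through $\tilde\mu_0$.
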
 
\begin{Rem}
Simple calculations give that
\begin{equation}
\mu_k(\alpha) > \mu_{k+1}(\alpha),  \text{~for all~} k \geqslant 0.    
\end{equation} 
\end{Rem}

\section{Proof of Theorem \ref{thm-1}}\label{section-proof-thm-1}


In the following, we define the operator \(\mathfrak{N}(\varphi)\) by
\begin{equation}\label{eq-defin-N-varphi}
    \begin{aligned}
        \mathfrak{N}(\varphi)(x)&:=\left(\displaystyle{\int_{\R^2}}\frac{e^{U(y)}\varphi(y)}{|x-y|^{\alpha}}dy\right)e^{U(x)}+\left(\displaystyle{\int_{\R^2}}\frac{e^{U(y)}}{|x-y|^{\alpha}}dy\right)e^{U(x)}\varphi(x)\\
        &:=\mathfrak{N}_{1}(\varphi)(x)+\mathfrak{N}_{2}(\varphi)(x)
    \end{aligned}
\end{equation}
and begin by proving that the solutions to \eqref{eq-2} are indeed bounded and smooth.

\begin{Prop}\label{prop-distributional}
    Assume that $\alpha\in (0,2)$, and $\varphi\in L^{2}_{w}(\mathbb{R}^2)$ is a distributional solution to the equation \eqref{eq-2}. Then $\varphi \in L^{\infty}(\R^{2})$.
\end{Prop}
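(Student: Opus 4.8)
The plan is to turn the distributional equation into an integral equation via the operator $K$ of \eqref{defin-K}, and then to reduce boundedness of $\varphi$ to the vanishing of the total mass $\int_{\R^2}\mathfrak{N}(\varphi)$. I would first record that $e^{U(x)}=(C_\alpha\langle x\rangle^{-2})^{(4-\alpha)/2}$, so that by \eqref{improtant-estimate-1} one has $\mathfrak{N}_2(\varphi)(x)=2(4-\alpha)\langle x\rangle^{-4}\varphi(x)$ and $\mathfrak{N}_1(\varphi)=e^{U}\big(|\cdot|^{-\alpha}*(e^{U}\varphi)\big)$; since $e^{2U(x)}\langle x\rangle^{4}=C_\alpha^{4-\alpha}\langle x\rangle^{-2(2-\alpha)}$ is bounded, $h:=e^{U}\varphi\in L^2(\R^2)$.

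\emph{Integrability and the integral representation.} The first step is to show $\mathfrak{N}(\varphi)\in L^1(\R^2)$ and $\int_{\R^2}(1+\log(1+|y|))|\mathfrak{N}(\varphi)(y)|\,dy<\infty$. For $\mathfrak{N}_2(\varphi)$ this is H\"older together with $\varphi\in L^2_w(\R^2)$, since $\langle\cdot\rangle^{-2}$ and $\langle\cdot\rangle^{-2}\log(1+|\cdot|)$ lie in $L^2(\R^2)$. For $\mathfrak{N}_1(\varphi)$ I would bound $\iint_{\R^2\times\R^2}\frac{e^{U(x)}e^{U(y)}|\varphi(y)|}{|x-y|^\alpha}\,dx\,dy$ either by the Hardy--Littlewood--Sobolev inequality (Lemma \ref{lema 2.1}) using $h\in L^2$, or, more transparently, by the stereographic change of variables \eqref{identity-integral}--\eqref{eq-S-x-S-y-x-y}, under which $e^{U(x)}|x-y|^{-\alpha}e^{U(y)}$ becomes a constant $\kappa(\alpha)$ times $\rho(x)^4\rho(y)^4|\mathcal{S}x-\mathcal{S}y|^{-\alpha}$ and (by Lemma \ref{lem-2.4} applied to the constant harmonic) the double integral collapses to $\kappa(\alpha)\mu_0(\alpha)\int_{\mathbb{S}^2}|\mathcal{S}_*\varphi|\lesssim\|\varphi\|_{L^2_w(\R^2)}$; the $\log$-weighted version is handled the same way, noting $\log(1+|\mathcal{S}^{-1}\eta|)\in L^2(\mathbb{S}^2)$ (a logarithmic singularity at the south pole). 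Once $\mathfrak{N}(\varphi)\in L^1(\R^2)$, Lemmas \ref{lem-K-2} and \ref{lem-K-1} give $\varphi=K(\mathfrak{N}(\varphi))+w$ with $w$ entire harmonic; since $\mathfrak{N}(\varphi)\in L^1$ and the kernel $\log\frac{1+|y|}{|x-y|}$ is, for fixed $x$, bounded off $\{y=x\}$, one gets $|K(\mathfrak{N}(\varphi))(x)|\lesssim 1+\log\langle x\rangle$, hence $K(\mathfrak{N}(\varphi))\in L^2_w(\R^2)$ and therefore $w\in L^2_w(\R^2)$; a Liouville argument (mean value on balls $B(x_0,|x_0|)$ against the $L^2_w$ bound gives $|w(x_0)|\lesssim|x_0|$, then the interior gradient estimate upgrades this to $w$ affine, and an affine function lies in $L^2_w$ only if constant) yields $w\equiv c$ for some $c\in\R$. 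A routine interior elliptic bootstrap — using Lemma \ref{lemma:integral_estimate} to control $|\cdot|^{-\alpha}*h$, with a short case distinction according to whether $\alpha\lessgtr 1$ — then gives $\varphi\in W^{2,p}_{\mathrm{loc}}(\R^2)$ for some $p>2$; with $|\varphi(x)|\lesssim\log\langle x\rangle$ in hand one has $|h(y)|\lesssim\langle y\rangle^{-(4-\alpha)+\varepsilon}$, so by Lemma \ref{lemma:integral_estimate} $|\mathfrak{N}(\varphi)(x)|\lesssim\langle x\rangle^{-4+\varepsilon}$ and $\mathfrak{N}(\varphi)\in L^1\cap L^\infty(\R^2)$; consequently $K(\mathfrak{N}(\varphi))\in C^1(\R^2)$ (Lemma \ref{lem-K-1}(3)) with $|\nabla\varphi(x)|=|\nabla K(\mathfrak{N}(\varphi))(x)|\lesssim\langle x\rangle^{-1}$ by Lemma \ref{lem-K-1}(1) and Lemma \ref{lemma:integral_estimate} (with $\lambda=1$).

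\emph{The key point: $\int_{\R^2}\mathfrak{N}(\varphi)=0$.} This is where the two-dimensional situation genuinely differs: a priori $\varphi$ is known only to grow no faster than $\log\langle x\rangle$, and boundedness forces the logarithmic coefficient, which equals $\tfrac{1}{2\pi}\int_{\R^2}\mathfrak{N}(\varphi)$, to vanish. To see it, set $S:=\int_{\R^2}\mathfrak{N}(\varphi)$ and pair the equation with the explicit bounded solution $\varphi_3(x)=\tfrac{4-\alpha}{2}\tfrac{1-|x|^2}{1+|x|^2}$ of \eqref{eq-2} (which satisfies $\varphi_3(x)\to-\tfrac{4-\alpha}{2}$ with $|\varphi_3+\tfrac{4-\alpha}{2}|\lesssim\langle x\rangle^{-2}$ and $|\nabla\varphi_3|\lesssim\langle x\rangle^{-3}$) via Green's second identity on $B_R$:
\[
\int_{B_R}\big(\varphi\,\Delta\varphi_3-\varphi_3\,\Delta\varphi\big)\,dx=\int_{\partial B_R}\big(\varphi\,\partial_\nu\varphi_3-\varphi_3\,\partial_\nu\varphi\big)\,d\sigma.
\]
Using $\Delta\varphi=-\mathfrak{N}(\varphi)$, $\Delta\varphi_3=-\mathfrak{N}(\varphi_3)$, and that $\mathfrak{N}$ is symmetric (both $\mathfrak{N}_1$ — the symmetric bilinear form with kernel $\tfrac{e^{U(x)}e^{U(y)}}{|x-y|^\alpha}$ — and $\mathfrak{N}_2$ — multiplication by $\big(\int|\cdot-y|^{-\alpha}e^{U(y)}dy\big)e^{U}$ — being self-adjoint), the left-hand side converges, as $R\to\infty$, to $\int_{\R^2}\big(\varphi_3\mathfrak{N}(\varphi)-\varphi\mathfrak{N}(\varphi_3)\big)=0$ (dominated convergence: $\varphi_3$ bounded and $\mathfrak{N}(\varphi)\in L^1$, while $|\mathfrak{N}(\varphi_3)|\lesssim\langle x\rangle^{-4}$ and $\varphi\in L^2_w$). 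On the right-hand side, $\int_{\partial B_R}\varphi\,\partial_\nu\varphi_3\,d\sigma\to 0$ (using $|\partial_\nu\varphi_3|\lesssim\langle x\rangle^{-3}$ and $\int_{\partial B_R}|\varphi|=o(R^{5/2})$ along a suitable sequence $R\to\infty$, by $\varphi\in L^2_w$), while $\int_{\partial B_R}\varphi_3\,\partial_\nu\varphi\,d\sigma\to -\tfrac{4-\alpha}{2}\lim_{R\to\infty}\int_{\partial B_R}\partial_\nu\varphi\,d\sigma=-\tfrac{4-\alpha}{2}\lim_{R\to\infty}\int_{B_R}\Delta\varphi\,dx=\tfrac{4-\alpha}{2}S$ (using $\varphi_3\to-\tfrac{4-\alpha}{2}$, the bound $|\varphi_3+\tfrac{4-\alpha}{2}|\lesssim\langle x\rangle^{-2}$ against $|\nabla\varphi|\lesssim\langle x\rangle^{-1}$, and $-\int_{B_R}\mathfrak{N}(\varphi)\to -S$). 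Hence $\tfrac{4-\alpha}{2}S=0$, and since $\alpha<2$ we get $S=\int_{\R^2}\mathfrak{N}(\varphi)=0$, which is precisely the identity \eqref{identity}. (An alternative is to pass to $\mathbb{S}^2$: $F=\mathcal{S}_*\varphi\in L^2(\mathbb{S}^2)$ solves $-\Delta_{\mathbb{S}^2}F=\kappa(\alpha)I_\alpha F+\tfrac{4-\alpha}{2}F$ off the south pole, where $I_\alpha F(\xi)=\int_{\mathbb{S}^2}|\xi-\eta|^{-\alpha}F(\eta)\,d\eta$; the defect distribution there is a multiple of $\delta$ because $F\in L^2$, and testing against $Y_{1,3}$ — for which $k(k+1)=\kappa(\alpha)\mu_1(\alpha)+\tfrac{4-\alpha}{2}$ at $k=1$ is a resonance in view of Lemma \ref{lem-2.4}, while $Y_{1,3}$ does not vanish at the south pole — forces that multiple to be $0$.)

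\emph{Conclusion and main obstacle.} With $\int_{\R^2}\mathfrak{N}(\varphi)=0$, writing $\log\tfrac{1+|y|}{|x-y|}=\log(1+|y|)-\log|x|-\log\tfrac{|x-y|}{|x|}$ in \eqref{defin-K} kills the term proportional to $\log|x|$, and the remaining pieces are bounded uniformly in $x$ ($K(\mathfrak{N}(\varphi))$ is continuous on $\R^2$ by Lemma \ref{lem-K-1}(3) since $\mathfrak{N}(\varphi)\in L^1\cap L^\infty$, and $\int_{\R^2}\log\tfrac{|x-y|}{|x|}\mathfrak{N}(\varphi)(y)\,dy\to 0$ as $|x|\to\infty$ by dominated convergence and the decay of $\mathfrak{N}(\varphi)$). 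Thus $K(\mathfrak{N}(\varphi))\in L^\infty(\R^2)$, so $\varphi=K(\mathfrak{N}(\varphi))+c\in L^\infty(\R^2)$, as claimed. I expect the hard part to be the vanishing of $\int_{\R^2}\mathfrak{N}(\varphi)$: unlike in higher dimensions, where solutions decay and no logarithmic mode is available, here one must actively exclude logarithmic growth, and this seems to hinge on having the explicit bounded solution $\varphi_3$ with a nonzero limit at infinity together with the self-adjointness of the linearized operator (equivalently, the $k=1$ resonance of the Funk--Hecke multipliers). A minor recurring nuisance is that several convolution estimates force one to distinguish the ranges $\alpha<1$ and $\alpha\geq 1$.
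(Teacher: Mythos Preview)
Your approach is correct but takes a genuinely different route from the paper. The paper never touches the integral representation or the vanishing of $\int_{\R^2}\mathfrak{N}(\varphi)$ at this stage. Instead it argues in two lines: first, from $\varphi\in L^2_w$ alone one checks (via Minkowski and Lemma~\ref{lemma:integral_estimate}) that $\mathfrak{N}(\varphi)\in L^q(\R^2)$ for some $q\in(1,2)$ with $\alpha q<2$, so elliptic regularity plus Sobolev gives $\varphi\in L^\infty_{\mathrm{loc}}$; second, the Kelvin transform $\psi(x)=\varphi(x/|x|^2)$ satisfies the \emph{same} equation \eqref{eq-2} (this is the conformal invariance hidden in \eqref{rescaling}), so $\psi\in L^\infty_{\mathrm{loc}}$ as well, which is exactly $\varphi\in L^\infty(\R^2)$. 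The identity $\int_{\R^2}\mathfrak{N}(\varphi)=0$ is then derived \emph{afterwards} in Proposition~\ref{prop-integral representation}, using boundedness rather than producing it.

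Your route --- integral representation, Liouville for the harmonic remainder, then the pairing with $\varphi_3$ to force $S=0$ --- is longer but has its own merit: it yields the key identity \eqref{identity} simultaneously with boundedness, and the Green's-identity argument exploiting the nonzero limit of $\varphi_3$ at infinity together with the self-adjointness of $\mathfrak{N}$ is a nice structural observation that the paper does not use. Two places to tighten: (i) the kernel $\log\frac{1+|y|}{|x-y|}$ is \emph{not} bounded off $\{y=x\}$ (it grows like $\log|y|$), so the pointwise bound $|K(\mathfrak{N}(\varphi))|\lesssim 1+\log\langle x\rangle$ needs both your log-weighted $L^1$ estimate \emph{and} the global $L^q$ bound $\mathfrak{N}(\varphi)\in L^q(\R^2)$, $q>1$, to handle the local singularity uniformly in $x$; (ii) the bootstrap to $\varphi\in W^{2,p}_{\mathrm{loc}}$, $p>2$, should be spelled out once (use $h=e^U\varphi\in L^{4/(4-\alpha)}\cap L^2$ to bound the far part of $|\cdot|^{-\alpha}*h$, and local boundedness of $\varphi$ for the near part). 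The paper's Kelvin-transform argument sidesteps both of these entirely.
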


\begin{proof}
For any $\phi(x)\in C^{\infty}_{c}(\R^{2})$, by the HLS inequality and the H\"older inequality, we have
\begin{equation}
    \begin{aligned}
        \int_{\R^{2}}\mathfrak{N}_{1}(\varphi)(x)\phi(x)dx&=\int_{\R^{2}}\int_{\R^{2}}\frac{e^{U(y)\varphi(y)e^{U(x)}\phi(x)}}{|x-y|^{\alpha}}dydx\\
        &\lesssim \left(\int_{\R^{2}}e^{\frac{4}{4-\alpha}U(y)}|\varphi|^{\frac{4}{4-\alpha}}(y)dy\right)^{\frac{4-\alpha}{4}} \left(\int_{\R^{2}}e^{\frac{4}{4-\alpha}U(x)}|\phi|^{\frac{4}{4-\alpha}}(x)dy\right)^{\frac{4-\alpha}{4}}\\
        &\lesssim\left( \int_{\R^{2}}\frac{|\varphi|^{\frac{4}{4-\alpha}}(y)}{(1+|y|^{2})^{2}}dy\right)^{\frac{4-\alpha}{4}}\left(\int_{\R^{2}}\frac{1}{(1+|x|^{2})^{2}}dx\right)^{\frac{4-\alpha}{4}}\\
        &\lesssim \left( \int_{\R^{2}}\frac{1}{(1+|y|^{2})^{2}}dy\right)^{\frac{2-\alpha}{4}}\left( \int_{\R^{2}}\frac{\varphi^{2}(y)}{(1+|y|^{2})^{2}}dy\right)^{\frac{1}{2}}<+\infty.
    \end{aligned}
\end{equation}
Notice that, by \eqref{improtant-estimate-1}
\begin{equation}\label{proof-distributional-solution-2}
    \begin{aligned}
        \left(\displaystyle{\int_{\R^2}}\frac{e^{U(y)}}{|x-y|^{\alpha}}dy\right)e^{U(x)}\varphi(x)=\frac{2(4-\alpha)}{C_{\alpha}^{2}}e^{\frac{4}{4-\alpha}U(x)}\varphi(x)=\frac{2(4-\alpha)}{(1+|x|^{2})^{2}}\varphi(x).
    \end{aligned}
\end{equation}
Then we obtain
\begin{equation}
    \begin{aligned}
        \int_{\R^{2}}\mathfrak{N}_{2}(\varphi)(x)\phi(x)dx&= \int_{\R^{2}}\frac{2(4-\alpha)}{(1+|x|^{2})^{2}}\varphi(x)\phi(x)dx\\
        &\lesssim\left(\int_{\R^{2}}\frac{\varphi^{2}(x)}{(1+|x|^{2})^{2}}dx\right)^{\frac{1}{2}}\left(\int_{\R^{2}}\frac{\phi^{2}(x)}{(1+|x|^{2})^{2}}dx\right)^{\frac{1}{2}}<+\infty.
    \end{aligned}
\end{equation}
Hence $\mathfrak{N}(\varphi)(x)\in L^{1}_{loc}(\R^{2})$ and the distributional formulation \eqref{distributional formulation} is well-defined.

Fix $q\in(1,2)$ such that $\alpha q\in(0,2)$. We now prove that $\mathfrak{N}(\varphi)(x)\in L^{q}(\R^{2})$. Applying the Minkowski inequality, Lemma \ref{lemma:integral_estimate} and the H\"older inequality, we obtain that
\begin{equation}
    \begin{aligned}
         \left(\int_{\R^{2}}\mathfrak{N}_{1}(|\varphi|)^{q}(x)dx\right)^{\frac{1}{q}}&\leq \int_{\R^2}\left(\int_{\R^{2}}\frac{e^{qU(x)}}{|x-y|^{q\alpha}}dx\right)^{\frac{1}{q}}e^{U(y)}|\varphi(y)|dy\lesssim  \int_{\R^2}\frac{|\varphi(y)|}{(1+|y|)^{4}}dy\\
         &\lesssim \left(\int_{\R^2}\frac{1}{(1+|y|)^{4}}dy\right)^{\frac{1}{2}}\left(\int_{\R^2}\frac{\varphi^{2}(y)}{(1+|y|^{2})^{2}}dy\right)^{\frac{1}{2}}<+\infty.
    \end{aligned}
\end{equation}
On the other hand, by \eqref{proof-distributional-solution-2} and the H\"older inequality, we have
\begin{equation}
    \begin{aligned}
        \left(\int_{\R^{2}}\mathfrak{N}_{2}(|\varphi|)^{q}(x)dx\right)^{\frac{1}{q}}&\lesssim\left(\int_{\R^{2}}\frac{\varphi^{2}(x)}{(1+|x|^{2})^{4}}dx\right)^{\frac{1}{2}}<+\infty.
    \end{aligned}
\end{equation}
Consequently, $\mathfrak{N}(\varphi) \in L^{q}(\mathbb{R}^{2})$. By the elliptic regularity theory \cite{gilbarg1977elliptic} and the Sobolev embedding theorem, we conclude that $\varphi \in L^{\infty}_{\mathrm{loc}}(\mathbb{R}^{2})$. 

Now, we consider the Kelvin transform of $\varphi$, namely
\begin{equation}
    \psi(x):=\varphi\left(\frac{x}{|x|^{2}}\right).
\end{equation}
Then using \eqref{eq-2} and the identity $|x||\frac{x}{|x|^{2}}-y|=|y||x-\frac{y}{|y|^{2}}|$, we get that
\begin{equation}
\begin{aligned}
     -\Delta\psi(x)&=-\frac{1}{|x|^{4}}\Delta\varphi\left(\frac{x}{|x|^{2}}\right)\\
     &=\frac{C_{\alpha}^{4-\alpha}}{\big(1+|x|^{2}\big)^{\frac{4-\alpha}{2}}}\int_{\R^2}\frac{\varphi(y)}{(|x||\frac{x}{|x|^{2}}-y|)^{\alpha}(1+|y|^{2})^{\frac{4-\alpha}{2}}}dy\\
     &\quad+\frac{C_{\alpha}^{4-\alpha}\varphi(\frac{x}{|x|^{2}})}{\big(1+|x|^{2}\big)^{\frac{4-\alpha}{2}}}\int_{\R^2}\frac{1}{(|x||\frac{x}{|x|^{2}}-y|)^{\alpha}(1+|y|^{2})^{\frac{4-\alpha}{2}}}dy\\
     &=\frac{C_{\alpha}^{4-\alpha}}{\big(1+|x|^{2}\big)^{\frac{4-\alpha}{2}}}\int_{\R^2}\frac{\varphi(y)}{(|y||x-\frac{y}{|y|^{2}}|)^{\alpha}(1+|y|^{2})^{\frac{4-\alpha}{2}}}dy\\
     &\quad+\frac{C_{\alpha}^{4-\alpha}\varphi(\frac{x}{|x|^{2}})}{\big(1+|x|^{2}\big)^{\frac{4-\alpha}{2}}}\int_{\R^2}\frac{1}{(|y||x-\frac{y}{|y|^{2}}|)^{\alpha}(1+|y|^{2})^{\frac{4-\alpha}{2}}}dy\\
     &=\left(\displaystyle{\int_{\R^2}}\frac{e^{U(y)}\psi(y)}{|x-y|^{\alpha}}dy\right)e^{U(x)}+\left(\displaystyle{\int_{\R^2}}\frac{e^{U(y)}}{|x-y|^{\alpha}}dy\right)e^{U(x)}\psi(x).
\end{aligned}    
\end{equation}
Hence $\psi$ satisfies the same equation as $\varphi$ and then $\psi\in L^{\infty}_{loc}(\R^{2})$. This implies that $\varphi\in L^{\infty}(\R^{2})$.
\end{proof}

\begin{Prop}
Assume that $\alpha\in (0,2)$, and $\varphi\in L^{2}_{w}(\mathbb{R}^2)$ is a distributional solution to the equation \eqref{eq-2}. Then $\varphi\in C^{\infty}(\R^{2})$ is a classical solution.
\end{Prop}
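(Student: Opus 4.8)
The plan is to bootstrap the regularity of $\varphi$ from $L^\infty(\mathbb{R}^2)$ (already established in Proposition \ref{prop-distributional}) up to $C^\infty(\mathbb{R}^2)$ by a standard elliptic bootstrap, taking care of the nonlocal factors. First I would observe that, since $\varphi\in L^\infty(\mathbb{R}^2)\cap L^2_w(\mathbb{R}^2)$, the weighted function $e^{U}\varphi$ lies in $L^1(\mathbb{R}^2)$: indeed $e^{U(x)}=(C_\alpha/(1+|x|^2))^{(4-\alpha)/2}$ decays like $|x|^{-(4-\alpha)}$ and $4-\alpha>2$, so $e^U\in L^1\cap L^\infty$, and multiplying by the bounded function $\varphi$ preserves this. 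Hence by the Hardy–Littlewood–Sobolev inequality (Lemma \ref{lema 2.1}) together with Lemma \ref{lemma:integral_estimate}, the potential $W(x):=\int_{\R^2}\frac{e^{U(y)}\varphi(y)}{|x-y|^\alpha}\,dy$ is bounded and continuous on $\mathbb{R}^2$; likewise $V(x):=\int_{\R^2}\frac{e^{U(y)}}{|x-y|^\alpha}\,dy=\frac{2(4-\alpha)}{C_\alpha^2}e^{\frac{\alpha}{4-\alpha}U(x)}$ by \eqref{improtant-estimate-1}, which is explicitly smooth and bounded.

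Next I would rewrite equation \eqref{eq-2} as $-\Delta\varphi = W(x)e^{U(x)} + V(x)e^{U(x)}\varphi(x) =: g(x)$ in the distributional sense. Since $U\in C^\infty(\mathbb{R}^2)$, $e^U\in C^\infty$, and $W,V,\varphi\in L^\infty_{\mathrm{loc}}$, the right-hand side $g$ lies in $L^\infty_{\mathrm{loc}}(\mathbb{R}^2)\subset L^p_{\mathrm{loc}}(\mathbb{R}^2)$ for every $p<\infty$. Interior $L^p$ elliptic estimates (\cite{gilbarg1977elliptic}) then give $\varphi\in W^{2,p}_{\mathrm{loc}}(\mathbb{R}^2)$ for all $p<\infty$, hence $\varphi\in C^{1,\gamma}_{\mathrm{loc}}(\mathbb{R}^2)$ for all $\gamma\in(0,1)$ by Sobolev embedding. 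This already upgrades the meaning of the equation: $g$ is now $C^{0,\gamma}_{\mathrm{loc}}$ because $\varphi$ is, $W$ is $C^1$ (by Lemma \ref{lem-K-1} type arguments, or directly since $e^U\varphi\in L^1\cap L^\infty$, differentiation under the integral of the kernel $|x-y|^{-\alpha}$ with $\alpha<2$ yields a locally Hölder potential), and $V,e^U$ are smooth. Schauder interior estimates then give $\varphi\in C^{2,\gamma}_{\mathrm{loc}}(\mathbb{R}^2)$, so $\varphi$ is a classical solution. Iterating: once $\varphi\in C^{k,\gamma}_{\mathrm{loc}}$, the potential $W$ gains one more derivative (each differentiation of $\int |x-y|^{-\alpha}e^{U(y)}\varphi(y)\,dy$ in $x$ costs one power in the kernel but stays integrable since $\alpha+k<\infty$ is handled by splitting near/far from the singularity and using the compact-support-free decay of $e^U$), so $g\in C^{k,\gamma}_{\mathrm{loc}}$ and Schauder gives $\varphi\in C^{k+2,\gamma}_{\mathrm{loc}}$. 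By induction $\varphi\in C^\infty(\mathbb{R}^2)$.

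The main technical point — and the place I would be most careful — is justifying that the convolution term $W(x)=\int_{\R^2}\frac{e^{U(y)}\varphi(y)}{|x-y|^\alpha}\,dy$ is as smooth as claimed and that its derivatives are controlled: differentiating under the integral sign produces kernels $\partial_x^\beta |x-y|^{-\alpha}$ which behave like $|x-y|^{-\alpha-|\beta|}$ near the diagonal, and these are not locally integrable once $|\beta|\geq 2-\alpha$. The standard remedy is to write $W$ as a Riesz-type potential and invoke the Calderón–Zygmund/Hölder regularity theory for such potentials: since $e^U\varphi\in L^1\cap L^\infty$ and, after the first bootstrap, $e^U\varphi\in C^{0,\gamma}_c$-localizable (i.e.\ $e^U\varphi = \chi e^U\varphi + (1-\chi)e^U\varphi$ with $\chi$ a cutoff, the second piece being $C^\infty$ near any fixed point), the fractional integration operator $I_{2-\alpha}$ maps $C^{k,\gamma}\to C^{k,\gamma}$ locally, so $W$ inherits the regularity of $e^U\varphi$ plus (potentially) extra smoothness from $\alpha<2$. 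An alternative, cleaner route is to note that $W$ itself solves $-\Delta W = c_\alpha$-something? — no; rather, since $|x|^{-\alpha}$ is (up to constant) the Riesz kernel of order $2-\alpha$, $W = c\, I_{2-\alpha}(e^U\varphi)$ and hence $(-\Delta)^{(2-\alpha)/2}W = c\,e^U\varphi$, reducing its regularity to that of $e^U\varphi$ via fractional elliptic regularity. Either way the induction closes; I would present the direct cutoff-plus-Schauder version as it avoids fractional operators and stays within the tools already cited in the paper (\cite{gilbarg1977elliptic} and the $K$-operator lemmas).
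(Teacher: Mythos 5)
Your bootstrap is correct, but it follows a genuinely different route from the paper. The paper works globally through the logarithmic potential: it shows $\mathfrak{N}(\varphi)\in L^{1}(\R^{2})\cap L^{\infty}(\R^{2})$ using \eqref{improtant-estimate-1} and the $L^\infty$ bound, writes $\varphi=K(\mathfrak{N}(\varphi))+P$ with $P$ harmonic by Weyl's lemma (so $\varphi\in C^{1}$ via Lemma \ref{lem-K-1}), and then gains one derivative per step: assuming $\varphi\in C^{k}$ it deduces $\mathfrak{N}(\varphi)\in C^{k}$ (the convolution term via \cite[Lemma 2.8]{Gluck2025dcds}) and differentiates the representation after splitting the kernel $\partial_i K$ with a cutoff $\eta$ into a smooth far part acting on $\mathfrak{N}(\varphi)\in L^{1}$ and a locally integrable near part acting on $\partial^{\beta}\mathfrak{N}(\varphi)$. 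You instead run a local $L^{p}$-then-Schauder bootstrap, gaining two derivatives per round, and treat the nonlocal coefficient $W$ by essentially the same near/far cutoff device (or Riesz-potential H\"older theory). Both are legitimate; the paper's choice has the side benefit that the representation $\varphi=K(\mathfrak{N}(\varphi))+P$ is needed anyway for Proposition \ref{prop-integral representation}, while yours stays within standard interior elliptic estimates from \cite{gilbarg1977elliptic}. Two points in your write-up deserve the care you partly give them: first, for a merely distributional solution the interior $W^{2,p}$ estimate is itself usually justified by the Newtonian-potential-plus-Weyl decomposition, i.e.\ the very structure the paper uses, so you are not entirely avoiding it; second, your parenthetical claims that $W$ is ``$C^{1}$'' by differentiating $|x-y|^{-\alpha}$ under the integral are false for $\alpha\geq 1$ (the differentiated kernel is not locally integrable), but this does not break the argument since Schauder only needs $W\in C^{0,\gamma}_{\mathrm{loc}}$ at the first pass, and in the iteration you correctly move derivatives onto the (cut off, by then $C^{k,\gamma}$) density $e^{U}\varphi$ rather than onto the kernel — which is exactly the content of the Gluck lemma the paper cites.
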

\begin{proof}
First, by \eqref{improtant-estimate-1} and Proposition \ref{prop-distributional}, we have
\begin{equation}  \label{eq-decay-estimate}
|\mathfrak{N}(\varphi)(x)| \lesssim \left(\int_{\mathbb{R}^2} \frac{e^{U(y)}}{|x-y|^{\alpha}} \, \mathrm{d}y\right) e^{U(x)} \lesssim \frac{1}{(1+|x|^{2})^{2}}.  
\end{equation}  
Moreover, integrating over $\mathbb{R}^{2}$ gives that
\begin{equation}  
\int_{\mathbb{R}^{2}} |\mathfrak{N}(\varphi)(x)| \, \mathrm{d}x \lesssim \int_{\mathbb{R}^{2}} \frac{1}{(1+|x|^{2})^{2}} \, \mathrm{d}x <+\infty.  
\end{equation}  
This shows that $\mathfrak{N}(\varphi) \in L^{\infty}(\mathbb{R}^{2}) \cap L^{1}(\mathbb{R}^{2})$. Then, Lemma \ref{lem-K-2} implies that the function 
\begin{equation}\label{proof-regularity-2-3}
    P(x):=\varphi(x)-K(\mathfrak{N}(\varphi))(x)
\end{equation}
is a distributional solution to $-\Delta P=0$ in $\R^{2}$. Applying Weyl's Lemma together with Lemma \ref{lem-K-1}, we conclude that $P\in C^{\infty}(\R^{2})$ and $\varphi(x)=K(\mathfrak{N}(\varphi))(x)+P(x)\in C^{1}(\R^{2})$.

Next, we prove by induction that $\varphi\in C^{\infty}(\R^{2})$. Let \(\eta \in C^\infty(\mathbb{R}^2)\) satisfy both \(\eta \equiv 0\) in \(B_1(0)\) and \(\eta \equiv 1\) in \(\mathbb{R}^2 \setminus B_2(0)\) and, for \(i \in \{1, 2\}\) set
$K_i(y):= -\frac{1}{2\pi} \frac{y_i}{|y|^2}\text{~for~} y \in \mathbb{R}^2 \setminus \{0\}$. Then
\begin{equation}\label{proof-regularity-2-4}
    \begin{aligned}
        \frac{\partial}{\partial x_i} K(\mathfrak{N}(\varphi))(x)&= \int_{\mathbb{R}^2} \eta(x-y)K_i(x-y)\mathfrak{N}(\varphi)(y) \, dy\\
        &\quad+ \int_{\mathbb{R}^2} (1-\eta(y))K_i(y)\mathfrak{N}(\varphi)(x-y) \, dy.
    \end{aligned}
\end{equation}
For \(k\in\{1,2,\ldots\}\), assume by induction that \(\varphi\in C^{k}(\mathbb{R}^{2})\) has been established and let \(\beta=(\beta_{1},\beta_{2})\) be a multi-index with \(|\beta|=k\). Let \(e_{i}\in\mathbb{N}^{2}\) denote the usual multi-index of order one corresponding to \(\frac{\partial}{\partial x_{i}}\), then by \eqref{proof-regularity-2-4}, we have
\begin{equation}
    \begin{aligned}
\partial^{\beta+e_{i}}K(\mathfrak{N}(\varphi))(x) 
&= \int_{\mathbb{R}^{2}}\partial_{x}^{\beta}\left(\eta(x-y)K_{i}(x-y)\right)\mathfrak{N}(\varphi)(y)\,\mathrm{d}y \\
&\quad + \int_{\mathbb{R}^{2}}(1-\eta(y))K_{i}(y)\partial_{x}^{\beta}\left(\mathfrak{N}(\varphi)(x-y)\right)\,\mathrm{d}y \\
&:=I_{1}(x) + I_{2}(x).
\end{aligned}
\end{equation}
Since \(\partial^{\beta}(\eta K_{i})\in L^{\infty}(\R^{2})\cap C^{\infty}(\mathbb{R}^{2})\) and \(\mathfrak{N}(\varphi)\in L^{1}(\mathbb{R}^{2})\), the Dominated Convergence Theorem guarantees that \(I_{1}(x)\in C^{0}(\mathbb{R}^{2})\). On the other hand, since $\varphi\in C^{k}(\mathbb{R}^{2})\cap L^{\infty}(\R^{2})$, we observe that
\begin{equation}
    \begin{aligned}
        e^{U(y)}\varphi(y)=\left(\frac{C_{\alpha}}{1+|y|^{2}}\right)^{\frac{4-\alpha}{2}}\varphi(y)\in L^{\frac{4}{4-\alpha}}(\R^{2})\cap L^{\infty}(\R^{2})\cap C^{k}(\R^{2}),
    \end{aligned}
\end{equation}
and \cite[Lemma 2.8]{Gluck2025dcds} implies that $\mathfrak{N}_{1}(\varphi)(x)\in C^{k}(\R^{2})$. Moreover, by \eqref{improtant-estimate-1}
\begin{equation}
    \begin{aligned}
        \mathfrak{N}_{2}(\varphi)(x)&=\left(\displaystyle{\int_{\R^2}}\frac{e^{U(y)}}{|x-y|^{\alpha}}dy\right)e^{U(x)}\varphi(x)=\frac{2(4-\alpha)}{C_{\alpha}^{2}}e^{\frac{4}{4-\alpha}U(x)}\varphi(x)\\
        &=\frac{2(4-\alpha)}{(1+|x|^{2})^{2}}\varphi(x)\in C^{k}(\R^{2}).
    \end{aligned}
\end{equation}
So that $\mathfrak{N}(\varphi)=\mathfrak{N}_{1}(\varphi)+\mathfrak{N}_{2}(\varphi)\in C^{k}(\mathbb{R}^{2})$. Hence, for every \(x\in\mathbb{R}^{2}\), we have
\begin{equation}
    \begin{aligned}
        \Big{|}(1-\eta(y))K_{i}(y)\partial^{\beta}(\mathfrak{N}(\varphi))(x-y)\Big{|}&\leq \frac{\chi_{B_{2}}(y)}{|y|}\|\mathfrak{N}(\varphi)\|_{C^{k}(B_{2}(x))}\in L^{1}(\mathbb{R}^{2};\mathrm{d}y),
    \end{aligned}
\end{equation}
so the Dominated Convergence Theorem guarantees the continuity of \(I_{2}(x)\). Having verified the continuity of \(\partial^{\beta+e_{i}}(K(\mathfrak{N}(\varphi)))\) for arbitrary multi-index $\beta+e_{i}$ of order \(k+1\), we conclude that \(K(\mathfrak{N}(\varphi))\in C^{k+1}(\mathbb{R}^{2})\). Equation \eqref{proof-regularity-2-3} together with the fact that \(P\in C^{\infty}(\mathbb{R}^{2})\) now implies that \(u\in C^{k+1}(\mathbb{R}^{2})\) for any \(k\in\{1,2,\ldots\}\). This completes the proof.
\end{proof}

The integral representation of solutions to \eqref{eq-2} is as follows.

\begin{Prop}\label{prop-integral representation}Assume that $\alpha\in (0,2)$, and $\varphi\in L^{2}_{w}(\mathbb{R}^2)$ is a distributional solution to the equation \eqref{eq-2}. Then
\begin{equation}\label{integral condition}
     \int_{\R^{2}}\mathfrak{N}(\varphi)(y)dy=0,
\end{equation}
and there exists a constant $C_{\varphi}:=\lim_{|x|\to+\infty}\varphi(x)$ such that
\begin{equation}\label{green-frormula}
    \varphi(x)=-\frac{1}{2\pi}\int_{\R^{2}}\log(|x-y|)\mathfrak{N}(\varphi)(y)dy+C_{\varphi}.
\end{equation}
\end{Prop}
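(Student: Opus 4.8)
The starting point is the representation established in the previous proposition: since $\mathfrak{N}(\varphi)\in L^1(\R^2)\cap L^\infty(\R^2)$, Lemma \ref{lem-K-2} gives that $P(x):=\varphi(x)-K(\mathfrak{N}(\varphi))(x)$ solves $-\Delta P=0$ in the distributional sense, hence (Weyl) $P$ is harmonic and smooth, and by the first proposition $\varphi\in L^\infty(\R^2)$. The plan is to first show $P$ is constant and then convert the representation $\varphi=K(\mathfrak{N}(\varphi))+P$ with $K$ as in \eqref{defin-K} into the $\log|x-y|$-form \eqref{green-frormula}, which forces \eqref{integral condition} as a byproduct. For the first step, I would use that $K(\mathfrak{N}(\varphi))$ is bounded: indeed $\abs{K(\mathfrak{N}(\varphi))(x)}\le \frac{1}{2\pi}\int_{\R^2}\abs{\log\frac{1+|y|}{|x-y|}}\,\abs{\mathfrak{N}(\varphi)(y)}\,dy$, and using the decay $\abs{\mathfrak{N}(\varphi)(y)}\lesssim \langle y\rangle^{-4}$ from \eqref{eq-decay-estimate} together with Lemma \ref{lemma:integral_estimate} (applied to the $\log$ via $\log t\le C_\epsilon t^\epsilon$ for the near-singularity and $\log\langle y\rangle$ growth being absorbed by $\langle y\rangle^{-4}$), one gets $K(\mathfrak{N}(\varphi))\in L^\infty(\R^2)$. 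Therefore $P=\varphi-K(\mathfrak{N}(\varphi))$ is a bounded harmonic function on $\R^2$, hence constant by Liouville's theorem; call it $C_0$.

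Next I must pass from $K$ to the bare logarithm. Write
\begin{equation*}
K(\mathfrak{N}(\varphi))(x)=-\frac{1}{2\pi}\int_{\R^2}\log|x-y|\,\mathfrak{N}(\varphi)(y)\,dy+\frac{1}{2\pi}\int_{\R^2}\log(1+|y|)\,\mathfrak{N}(\varphi)(y)\,dy,
\end{equation*}
where both integrals converge absolutely thanks to $\abs{\mathfrak{N}(\varphi)(y)}\lesssim\langle y\rangle^{-4}$ and Lemma \ref{lemma:integral_estimate} (the $\log|x-y|$ integral is finite for each fixed $x$ since the singularity is integrable and the tail decays). The second integral is a finite constant, say $C_1$. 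Hence
\begin{equation*}
\varphi(x)=-\frac{1}{2\pi}\int_{\R^2}\log|x-y|\,\mathfrak{N}(\varphi)(y)\,dy+(C_0+C_1).
\end{equation*}
To identify the constant and prove \eqref{integral condition}, I examine the behavior as $|x|\to\infty$. Split $\log|x-y|=\log|x|+\log\frac{|x-y|}{|x|}$; the first piece pulls out $-\frac{\log|x|}{2\pi}\int_{\R^2}\mathfrak{N}(\varphi)$, while the remainder $\int_{\R^2}\log\frac{|x-y|}{|x|}\mathfrak{N}(\varphi)(y)\,dy\to 0$ as $|x|\to\infty$ by dominated convergence — for $|y|\le |x|/2$ one has $\abs{\log\frac{|x-y|}{|x|}}\le C$ and the factor $\langle y\rangle^{-4}$ is integrable, while the region $|y|>|x|/2$ contributes $o(1)$ since $\int_{|y|>R}\langle y\rangle^{-4}\log(\cdots)\to 0$. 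Now: if $\int_{\R^2}\mathfrak{N}(\varphi)\ne 0$, then $\varphi(x)\sim -\frac{\log|x|}{2\pi}\int\mathfrak{N}(\varphi)\to\pm\infty$, contradicting $\varphi\in L^\infty(\R^2)$. This forces $\int_{\R^2}\mathfrak{N}(\varphi)(y)\,dy=0$, i.e. \eqref{integral condition}. With that, the $\log|x|$ term vanishes identically (the splitting is exact once the integral is zero), and letting $|x|\to\infty$ shows $\varphi(x)\to C_0+C_1=:C_\varphi$, giving \eqref{green-frormula}.

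The main obstacle is the interchange-of-limits and decay bookkeeping in the last step: making rigorous that $\int_{\R^2}\log\frac{|x-y|}{|x|}\,\mathfrak{N}(\varphi)(y)\,dy\to 0$ requires a careful dominated-convergence argument handled separately on the regions $\{|y|\le |x|/2\}$, $\{|x|/2<|y|<2|x|\}$ (where the near-singularity $\log|x-y|$ lives and one uses $\log t\lesssim t^{-\epsilon}+t^{\epsilon}$ plus Lemma \ref{lemma:integral_estimate}), and $\{|y|\ge 2|x|\}$. Everything else — the $L^\infty$ bound on $K(\mathfrak{N}(\varphi))$, the Liouville step, and the absolute convergence of the $\log|x-y|$ integral — is routine once one has the pointwise decay \eqref{eq-decay-estimate} and Lemma \ref{lemma:integral_estimate} in hand.
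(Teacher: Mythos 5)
Your overall route coincides with the paper's: write $\varphi=K(\mathfrak{N}(\varphi))+P$ with $P$ harmonic, show $P$ is constant, then split $\log|x-y|=\log|x|+\log\tfrac{|x-y|}{|x|}$ and use the boundedness of $\varphi$ to force $\int_{\R^2}\mathfrak{N}(\varphi)=0$ and identify $C_\varphi$. The final limiting argument you sketch is essentially the paper's and is fine.

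However, there is a genuine error in the step where you claim $K(\mathfrak{N}(\varphi))\in L^{\infty}(\R^2)$ from the decay $|\mathfrak{N}(\varphi)(y)|\lesssim\langle y\rangle^{-4}$ alone. This is false in general: for $|x|$ large and $y$ in a fixed ball one has
\begin{equation*}
\log\frac{1+|y|}{|x-y|}\approx-\log|x|,
\end{equation*}
and the decay in $y$ cannot absorb this growth in $x$; the kernel estimate only yields $|K(\mathfrak{N}(\varphi))(x)|\lesssim 1+\log(1+|x|)$. Boundedness of $K(\mathfrak{N}(\varphi))$ would essentially require already knowing $\int_{\R^2}\mathfrak{N}(\varphi)=0$, which is precisely the conclusion \eqref{integral condition} you are trying to prove, so as written the argument is circular and the appeal to the classical Liouville theorem for \emph{bounded} harmonic functions does not apply. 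The repair is exactly what the paper does: prove the logarithmic bound $|K(\mathfrak{N}(\varphi))(x)|\lesssim 1+\log(1+|x|)$ (this needs the case analysis in $|x|,|y|$, not just Lemma \ref{lemma:integral_estimate}), conclude that $P=\varphi-K(\mathfrak{N}(\varphi))$ is harmonic with at most logarithmic growth, and invoke a Liouville-type result for harmonic functions of sublinear growth (the paper cites \cite[Lemma 2.12]{Gluck2025dcds}). With that substitution your proof goes through and is the same as the paper's.
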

\begin{proof}
From \eqref{proof-regularity-2-3}, we have
\begin{equation}
    \varphi(x)=K(\mathfrak{N}(\varphi))(x)+P(x),
\end{equation}
where the operator $K$ is defined in \eqref{defin-K} and $P$ is a harmonic function satisfying $-\Delta P=0$ in $\R^{2}$.  Now, we claim that 
\begin{equation}\label{eq-decay-K-varphi}
    |K(\mathfrak{N}(\varphi))(x)|\lesssim 1+\log(1+|x|)\text{~for any~}x\in\R^{2}.
\end{equation}
Notice that
\begin{equation}
    \begin{aligned}
       K(\mathfrak{N}(\varphi))(x)
       &=\frac{1}{2\pi}\int_{\R^{2}}\log(1+|y|)\mathfrak{N}(\varphi)(y)dy-\frac{1}{2\pi}\int_{\R^{2}}\log(|x-y|)\mathfrak{N}(\varphi)(y)dy\\
       &:=K_{1}(\mathfrak{N}(\varphi))+K_{2}(\mathfrak{N}(\varphi))(x).
    \end{aligned}
\end{equation}
Then we need to estimate $K_{1}(\mathfrak{N}(\varphi))$ and $K_{2}(\mathfrak{N}(\varphi))(x)$ separately. First, by \eqref{eq-decay-estimate}, the term $K_{1}(\mathfrak{N}(\varphi))$ can be estimated by
\begin{equation}
    \begin{aligned}
        |K_{1}(\mathfrak{N}(\varphi))|\lesssim\int_{\R^{2}}\frac{\log(1+|y|)}{(1+|y|^{2})^{2}}dy=2\pi\int_{0}^{+\infty}\frac{r\log(1+r)}{(1+r^{2})^{2}}dr=\frac{\pi^{2}}{4}.
    \end{aligned}
\end{equation}
On the other hand, the term $K_{2}(\mathfrak{N}(\varphi))(x)$ can be bounded by
\begin{equation}
    \begin{aligned}
       |K_{2}(\mathfrak{N}(\varphi))(x)|\lesssim \int_{\R^{2}}\frac{|\log|x-y||}{(1+|y|^{2})^{2}}dy=\int_{\R^{2}}\frac{|\log|y||}{(1+|x-y|^{2})^{2}}dy.
    \end{aligned}
\end{equation}
The term $K_{2}(\mathfrak{N}(\varphi))(x)$ will be estimated by several cases. \\
Case 1: $|x|\leq 2$.
\begin{equation}
    \begin{aligned}
        \int_{\R^{2}}\frac{|\log|y||}{(1+|x-y|^{2})^{2}}dy&=\int_{|y|\leq 4}\frac{|\log|y||}{(1+|x-y|^{2})^{2}}dy+\int_{|y|\geq 4}\frac{\log|y|}{(1+|x-y|^{2})^{2}}dy\\
        &\lesssim \int_{|y|\leq 4}|\log|y||dy+\int_{|y|\geq 4}\frac{\log|y|}{(1+|y|^{2})^{2}}dy\lesssim 1.\\
    \end{aligned}
\end{equation}
Case 2: $|x|\geq 2$ and $|y|\geq 2|x|$. Then $|x-y|\geq |y|-|x|\geq \frac{|y|}{2}$ and
\begin{equation}
    \begin{aligned}
        \int_{|y|\geq 2|x|}\frac{|\log|y||}{(1+|x-y|^{2})^{2}}dy&\lesssim \int_{|y|\geq 4}\frac{\log|y|}{(1+|y|^{2})^{2}}dy\lesssim1.
    \end{aligned}
\end{equation}
Case 3: $|x|\geq 2$ and $|y|\leq \frac{|x|}{2}$. Then $|x-y|\geq |x|-|y|\geq \frac{|x|}{2}$ and
\begin{equation}
    \begin{aligned}
        \int_{|y|\leq\frac{|x|}{2}}\frac{|\log|y||}{(1+|x-y|^{2})^{2}}dy&\lesssim \frac{1}{(1+|x|^{2})^{2}}\int_{|y|\leq 1}|\log|y||dy\\&\quad+\frac{1}{(1+|x|^{2})^{2}}\int_{1\leq|y|\leq\frac{|x|}{2}}\log|y|dy\lesssim1.
    \end{aligned}
\end{equation}
Case 4: $|x|\geq 2$ and $\frac{|x|}{2}\leq|y|\leq 2|x|$. Then $|y|\geq1$ and
\begin{equation}
    \begin{aligned}
        \int_{\frac{|x|}{2}\leq|y|\leq 2|x|}\frac{|\log|y||}{(1+|x-y|^{2})^{2}}dy\lesssim \log(2|x|) \int_{\R^{2}}\frac{1}{(1+|y|^{2})^{2}}dy\lesssim 1+ \log(1+|x|).
    \end{aligned}
\end{equation}
The combination of these estimates establishes \eqref{eq-decay-K-varphi}, yielding
\begin{equation}
    P(x)=\varphi(x)-K(\mathfrak{N}(\varphi))(x)\lesssim 1+\log(1+|x|)\text{~for any~}x\in\R^{2}.
\end{equation}
In conjunction with \cite[Lemma 2.12]{Gluck2025dcds}, we conclude that $P$ is a constant and the solution $\varphi$ admits the integral representation
\begin{equation}
\begin{aligned}
    \varphi(x)&=K_{1}(\mathfrak{N}(\varphi))+K_{2}(\mathfrak{N}(\varphi))(x)+P(x)\\
    &=-\frac{1}{2\pi}\int_{\R^{2}}\log(|x-y|)\mathfrak{N}(\varphi)(y)dy+C_{\varphi}.
\end{aligned} 
\end{equation}
where $C_{\varphi}$ is a constant depending on the solution $\varphi$.

Next, for sufficiently large $|x|$, we have
\begin{equation}\label{green-frormula-proof-1}
    \begin{aligned}
        \varphi(x)
        &=-\frac{1}{2\pi}\log|x|\int_{\R^{2}}\mathfrak{N}(\varphi)(y)dy-\frac{1}{2\pi}\int_{\R^{2}}\log\left(\frac{|x-y|}{|x|}\right)\mathfrak{N}(\varphi)(y)dy+C_{\varphi}.
    \end{aligned}
\end{equation}
Notice that for any given $\delta>0$ small enough
\begin{equation}
    \begin{aligned}
        \int_{\R^{2}}\left|\log\frac{|x-y|}{|x|}\right|\frac{1}{(1+|y|^{2})^{2}}dy&=\int_{\R^{2}}|\log|\frac{x}{|x|}-z||\frac{|x|^{2}}{(1+|x|^{2}|z|^{2})^{2}}dz\\
        &=\int_{|z|\leq\delta}|\log|\frac{x}{|x|}-z||\frac{|x|^{2}}{(1+|x|^{2}|z|^{2})^{2}}dz\\
        &\quad+\int_{|z|\geq \delta}|\log|\frac{x}{|x|}-z||\frac{|x|^{2}}{(1+|x|^{2}|z|^{2})^{2}}dz,
    \end{aligned}
\end{equation}

\begin{equation}
    \begin{aligned}
\int_{|z|\leq\delta}|\log|\frac{x}{|x|}-z||\frac{|x|^{2}}{(1+|x|^{2}|z|^{2})^{2}}dz&\lesssim\int_{|z|\leq\delta}\frac{|z||x|^{2}}{(1+|x|^{2}|z|^{2})^{2}}dz\\
&\leq\frac{1}{|x|}\int_{|y|
\leq |x|\delta}\frac{|y|}{(1+|y|^{2})^{2}}dy\lesssim \frac{1}{|x|}\to0 \text{~as~}|x|\to+\infty,
    \end{aligned}
\end{equation}
and
\begin{equation}
    \begin{aligned}
        \int_{|z|\geq \delta}&|\log|\frac{x}{|x|}-z||\frac{|x|^{2}}{(1+|x|^{2}|z|^{2})^{2}}dz\\
        &\lesssim\frac{1}{|x|^{2}}\int_{|z|\geq \delta}|\log|\frac{x}{|x|}-z||\frac{1}{|z|^{4}}dz\\
        &\lesssim \frac{1}{|x|^{2}}\int_{|\frac{x}{|x|}-z|\leq \delta \cap |z|\geq \delta}|\log|\frac{x}{|x|}-z||\frac{1}{|z|^{4}}dz+\frac{1}{|x|^{2}}\int_{|\frac{x}{|x|}-z|\geq \delta\cap|z|\geq \delta}|\log|\frac{x}{|x|}-z||\frac{1}{|z|^{4}}dz\\
        &\lesssim \frac{1}{|x|^{2}}\int_{|z|\leq \delta}|\log|z||dz+\frac{1}{|x|^{2}}\int_{|z|\geq \delta}\frac{1+\log(1+|z|)}{|z|^{4}}dz\\
        &\lesssim\frac{1}{|x|^{2}}\to0 \text{~as~}|x|\to+\infty.
    \end{aligned}
\end{equation}
Hence,
\begin{equation}\label{green-frormula-proof-2}
    \begin{aligned}
        \lim_{|x|\to+\infty}\left|\int_{\R^{2}}\log\left(\frac{|x-y|}{|x|}\right)\mathfrak{N}(\varphi)(y)dy\right|\leq \lim_{|x|\to+\infty}\int_{\R^{2}}\left|\log\frac{|x-y|}{|x|}\right|\frac{1}{(1+|y|^{2})^{2}}dy=0.
    \end{aligned}
\end{equation}
Since $\varphi$ is bounded, \eqref{green-frormula-proof-1} and \eqref{green-frormula-proof-2} yield that
\begin{equation}
    \int_{\R^{2}}\mathfrak{N}(\varphi)(y)dy=0\text{~and~}C_{\varphi}=\lim_{|x|\to+\infty}\varphi(x).
\end{equation}    
This completes the proof.
\end{proof}



A simple calculation shows that
 
\begin{Lem}\label{prop-1} Let $\varphi_{j}$, $j=1,2,3$ be defined in  \eqref{eq-thm-3}, and let $\rho(x):=\left(\frac{2}{1+|x|^{2}}\right)^{\frac{1}{2}}$. Then we have 
    \begin{equation}
    \begin{aligned}
       \mathcal{S}_{*}\rho(\xi)=(1+\xi_{3})^{\frac{1}{2}}\text{~and~}\mathcal{S}_{*}\varphi_{j}(\xi)=\frac{4-\alpha}{2}\xi_{j},j=1,2,3.
    \end{aligned}
\end{equation}
\end{Lem}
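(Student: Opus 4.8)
The plan is a direct computation using the explicit formula \eqref{defin-S--1} for $\mathcal{S}^{-1}$ together with the constraint $|\xi|^2=\xi_1^2+\xi_2^2+\xi_3^2=1$ on $\mathbb{S}^2$. First I would compute the modulus of $\mathcal{S}^{-1}\xi=\bigl(\tfrac{\xi_1}{1+\xi_3},\tfrac{\xi_2}{1+\xi_3}\bigr)$: since $\xi_1^2+\xi_2^2=1-\xi_3^2=(1-\xi_3)(1+\xi_3)$, one gets
\begin{equation}\label{eq-prop1-modulus}
    |\mathcal{S}^{-1}\xi|^2=\frac{\xi_1^2+\xi_2^2}{(1+\xi_3)^2}=\frac{1-\xi_3}{1+\xi_3},
    \qquad\text{hence}\qquad
    1+|\mathcal{S}^{-1}\xi|^2=\frac{2}{1+\xi_3},
    \qquad
    1-|\mathcal{S}^{-1}\xi|^2=\frac{2\xi_3}{1+\xi_3},
\end{equation}
for every $\xi\in\mathbb{S}^2\setminus\{(0,0,-1)\}$, so $1+\xi_3>0$ and these expressions are well defined.

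Next I would substitute \eqref{eq-prop1-modulus} into the definitions. For $\rho$, by \eqref{defin-S-*} and the definition $\rho(x)=\bigl(\tfrac{2}{1+|x|^2}\bigr)^{1/2}$,
\[
\mathcal{S}_{*}\rho(\xi)=\rho(\mathcal{S}^{-1}\xi)=\left(\frac{2}{1+|\mathcal{S}^{-1}\xi|^2}\right)^{1/2}=(1+\xi_3)^{1/2}.
\]
For $j=1,2$, using $\varphi_j(x)=\tfrac{(4-\alpha)x_j}{1+|x|^2}$ and the $j$-th coordinate of $\mathcal{S}^{-1}\xi$ together with \eqref{eq-prop1-modulus},
\[
\mathcal{S}_{*}\varphi_j(\xi)=\varphi_j(\mathcal{S}^{-1}\xi)
=\frac{(4-\alpha)\,\tfrac{\xi_j}{1+\xi_3}}{\tfrac{2}{1+\xi_3}}
=\frac{4-\alpha}{2}\,\xi_j,
\]
and for $j=3$, using $\varphi_3(x)=\tfrac{4-\alpha}{2}\,\tfrac{1-|x|^2}{1+|x|^2}$ and the last two identities in \eqref{eq-prop1-modulus},
\[
\mathcal{S}_{*}\varphi_3(\xi)=\varphi_3(\mathcal{S}^{-1}\xi)
=\frac{4-\alpha}{2}\cdot\frac{1-|\mathcal{S}^{-1}\xi|^2}{1+|\mathcal{S}^{-1}\xi|^2}
=\frac{4-\alpha}{2}\cdot\frac{2\xi_3/(1+\xi_3)}{2/(1+\xi_3)}
=\frac{4-\alpha}{2}\,\xi_3,
\]
which is exactly the claimed formula for all $j=1,2,3$.

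There is essentially no obstacle here: the statement is purely algebraic, and the only point worth a remark is that $\mathcal{S}^{-1}$ is defined on $\mathbb{S}^2\setminus\{(0,0,-1)\}$ only, so the identities hold pointwise on that set (equivalently, almost everywhere on $\mathbb{S}^2$), which is all that is needed when these functions are later viewed as elements of $L^2(\mathbb{S}^2)$. Alternatively, one can avoid inverting $\mathcal{S}$ altogether by reading the identities directly off $\mathcal{S}(x)=\bigl(\tfrac{2x}{1+|x|^2},\tfrac{1-|x|^2}{1+|x|^2}\bigr)$: for $\xi=\mathcal{S}(x)$ one has $\xi_j=\tfrac{2x_j}{1+|x|^2}$ $(j=1,2)$, $\xi_3=\tfrac{1-|x|^2}{1+|x|^2}$, and $1+\xi_3=\tfrac{2}{1+|x|^2}=\rho(x)^2$, from which the same conclusions follow immediately.
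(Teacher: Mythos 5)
Your proposal is correct and follows essentially the same route as the paper: a direct substitution of $\mathcal{S}^{-1}\xi$ into the definitions, using $\xi_1^2+\xi_2^2=1-\xi_3^2$ to simplify $1+|\mathcal{S}^{-1}\xi|^2=\tfrac{2}{1+\xi_3}$, exactly as in the paper's computation. Your closing remark (reading the identities directly off $\mathcal{S}(x)$) is a harmless equivalent reformulation, not a different method.
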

\begin{proof}
By \eqref{defin-S--1}, \eqref{identity-integral} and \eqref{defin-S-*}, we get
\begin{equation}
    \begin{aligned}
        \mathcal{S}_{*}\varphi_{1}(\xi)=\varphi_{1}(\mathcal{S}^{-1}\xi)=\frac{(4-\alpha)\xi_{1}}{1+\xi_{3}}\cdot\frac{1}{1+\frac{\xi_{1}^{2}+\xi^{2}_{2}}{(1+\xi_{3})^{2}}}=\frac{(4-\alpha)\xi_{1}}{1+\xi_{3}}\cdot\frac{1+\xi_{3}}{2}=\frac{4-\alpha}{2}\xi_{1},
    \end{aligned}
\end{equation} 

\begin{equation}
    \begin{aligned}
        \mathcal{S}_{*}\varphi_{2}(\xi)=\varphi_{2}(\mathcal{S}^{-1}\xi)=\frac{(4-\alpha)\xi_{2}}{1+\xi_{3}}\cdot\frac{1}{1+\frac{\xi_{1}^{2}+\xi^{2}_{2}}{(1+\xi_{3})^{2}}}=\frac{(4-\alpha)\xi_{2}}{1+\xi_{3}}\cdot\frac{1+\xi_{3}}{2}=\frac{4-\alpha}{2}\xi_{2},
    \end{aligned}
\end{equation} 
and
\begin{equation}
    \begin{aligned}
        \mathcal{S}_{*}\varphi_{3}(\xi)=\varphi_{3}(\mathcal{S}^{-1}\xi)=\frac{4-\alpha}{2}\cdot\frac{1-\frac{\xi_{1}^{2}+\xi^{2}_{2}}{(1+\xi_{3})^{2}}}{1+\frac{\xi_{1}^{2}+\xi^{2}_{2}}{(1+\xi_{3})^{2}}}=\frac{4-\alpha}{2}\xi_{3}.
    \end{aligned}
\end{equation} 
Similarly, we have
\begin{equation}
    \mathcal{S}_{*}\rho(\xi)=\rho(\mathcal{S}^{-1}\xi)=\left(\frac{2}{1+\frac{\xi_{1}^{2}+\xi^{2}_{2}}{(1+\xi_{3})^{2}}}\right)^{\frac{1}{2}}=(1+\xi_{3})^{\frac{1}{2}}.
\end{equation}
This completes the proof.
\end{proof}

\begin{Rem}\label{remark-1}
Lemma \ref{prop-1} together with \eqref{defin-H-1} implies that the weighted pushforward map $\mathcal{S}_*$ is a one-to-one map 
from the subspace $\operatorname{Span}\{\varphi_j, 1 \leq j \leq 3\} \subset L^\infty(\mathbb{R}^2)$ to the subspace $\mathcal{H}_{1}$, and so is the weighted pullback map $\mathcal{S}^* \colon \mathcal{H}_{1} \to \operatorname{Span}\{\varphi_j, 1 \leq j \leq 3\}$.   
\end{Rem}

Let us denote
\begin{equation}
    \begin{aligned}
       \mathcal{T}_{\mathbb{S}^{2}}\Phi(\xi)= \mathcal{T}_{\mathbb{S}^{2},1}\Phi(\xi)+\mathcal{T}_{\mathbb{S}^{2},2}\Phi(\xi),
    \end{aligned}
\end{equation}
where $\xi\in\mathbb{S}^{2}$, 
\begin{equation}
    \mathcal{T}_{\mathbb{S}^{2},1}\Phi(\xi):=-\frac{1}{2\pi}C_{\alpha}^{4-\alpha}2^{-(4-\alpha)}\int_{\mathbb{S}^{2}}\displaystyle{\int_{\mathbb{S}^{2}}}\log\left(\frac{|\xi-\eta|}{\mathcal{S}_{*}\rho(\xi)\mathcal{S}_{*}\rho(\eta)}\right)\frac{1}{|\eta-\sigma|^{\alpha}}\Phi(\sigma)d\sigma d\eta,
\end{equation}
and
\begin{equation}
    \mathcal{T}_{\mathbb{S}^{2},2}\Phi(\xi):=-\frac{1}{2\pi}C_{\alpha}^{4-\alpha}2^{-(4-\alpha)}\int_{\mathbb{S}^{2}}\displaystyle{\int_{\mathbb{S}^{2}}}\log\left(\frac{|\xi-\eta|}{\mathcal{S}_{*}\rho(\xi)\mathcal{S}_{*}\rho(\eta)}\right)\frac{1}{|\eta-\sigma|^{\alpha}}\Phi(\eta)d\sigma d\eta.
\end{equation}
\begin{Lem}\label{lemma-4.4} Assume that $\alpha\in (0,2)$, and $\varphi\in L^{2}_{w}(\mathbb{R}^2)$ is a distributional solution to the equation \eqref{eq-2}. Then $\mathcal{S}_{*}\varphi \in L^{2}(\mathbb{S}^{2})$, and
\begin{equation}
    \mathcal{S}_{*}\varphi(\xi) = \mathcal{T}_{\mathbb{S}^{2}} \mathcal{S}_{*}\varphi(\xi)+C_{\varphi},
\end{equation}
where $C_{\varphi}:=\lim_{|x|\to+\infty}\varphi(x)$ defined in Proposition \ref{prop-integral representation}.
\end{Lem}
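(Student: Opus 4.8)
The plan is to transplant the integral representation \eqref{green-frormula} for $\varphi$ from $\mathbb{R}^{2}$ onto $\mathbb{S}^{2}$ by means of the stereographic projection $\mathcal{S}$. The membership $\mathcal{S}_{*}\varphi\in L^{2}(\mathbb{S}^{2})$ is essentially free: by Proposition \ref{prop-distributional} we have $\varphi\in L^{\infty}(\mathbb{R}^{2})$, hence $\mathcal{S}_{*}\varphi\in L^{\infty}(\mathbb{S}^{2})\subset L^{2}(\mathbb{S}^{2})$ since $\mathbb{S}^{2}$ has finite measure (alternatively, one may invoke the fact that, up to a multiplicative constant, $u\mapsto\mathcal{S}_{*}u$ is an isometry from $L^{2}_{w}(\mathbb{R}^{2})$ onto $L^{2}(\mathbb{S}^{2})$). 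For the identity itself, the starting point is the representation
\[
\varphi(x)=-\frac{1}{2\pi}\int_{\mathbb{R}^{2}}\log(|x-y|)\,\mathfrak{N}(\varphi)(y)\,\mathrm{d}y+C_{\varphi}
\]
from Proposition \ref{prop-integral representation}, together with the splitting $\mathfrak{N}(\varphi)=\mathfrak{N}_{1}(\varphi)+\mathfrak{N}_{2}(\varphi)$ of \eqref{eq-defin-N-varphi}. Unfolding the convolution in $\mathfrak{N}_{1}$ and $\mathfrak{N}_{2}$, the two contributions become triple integrals over $\mathbb{R}^{2}\times\mathbb{R}^{2}$: one variable, say $y$, is tied to the kernel $\log|x-y|$ and carries a factor $e^{U(y)}$, the other, say $z$, is tied to the Riesz kernel $|y-z|^{-\alpha}$ and carries a factor $e^{U(z)}$, and $\varphi$ is evaluated at $z$ in the $\mathfrak{N}_{1}$-term and at $y$ in the $\mathfrak{N}_{2}$-term. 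The absolute convergence of these multiple integrals --- which licenses Fubini's theorem --- follows from $\varphi\in L^{\infty}(\mathbb{R}^{2})$ together with the decay estimate \eqref{eq-decay-estimate} and the log-kernel estimates already carried out in the proof of Proposition \ref{prop-integral representation}.

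I would then perform the conformal change of variables $x=\mathcal{S}^{-1}\xi$, $y=\mathcal{S}^{-1}\eta$, $z=\mathcal{S}^{-1}\sigma$, using the identities \eqref{eq-S-x-S-y-x-y} and \eqref{identity-integral}. The ingredients are: (a) from $\rho^{2}(x)=\tfrac{2}{1+|x|^{2}}$ one obtains $e^{U(x)}=\big(\tfrac{C_{\alpha}}{2}\big)^{\frac{4-\alpha}{2}}\rho^{4-\alpha}(x)$; (b) $\log|x-y|=\log\dfrac{|\mathcal{S}x-\mathcal{S}y|}{\rho(x)\rho(y)}=\log\dfrac{|\xi-\eta|}{\mathcal{S}_{*}\rho(\xi)\,\mathcal{S}_{*}\rho(\eta)}$; (c) $|y-z|^{-\alpha}=\big(\mathcal{S}_{*}\rho(\eta)\,\mathcal{S}_{*}\rho(\sigma)\big)^{\alpha}|\eta-\sigma|^{-\alpha}$; (d) the Jacobians $\mathrm{d}y=(\mathcal{S}_{*}\rho(\eta))^{-4}\,\mathrm{d}\eta$ and $\mathrm{d}z=(\mathcal{S}_{*}\rho(\sigma))^{-4}\,\mathrm{d}\sigma$; and (e) $\varphi(\mathcal{S}^{-1}\cdot)=\mathcal{S}_{*}\varphi(\cdot)$, $\rho(\mathcal{S}^{-1}\cdot)=\mathcal{S}_{*}\rho(\cdot)$. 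The decisive point is the exact cancellation of the conformal weights: the total exponent of $\mathcal{S}_{*}\rho(\eta)$ --- contributed respectively by $e^{U(y)}$, the Riesz kernel, and the Jacobian $\mathrm{d}y$ --- equals $(4-\alpha)+\alpha-4=0$, and likewise for $\mathcal{S}_{*}\rho(\sigma)$, whereas the two $e^{U}$ factors combine into the constant $C_{\alpha}^{4-\alpha}2^{-(4-\alpha)}$. Once these weights have disappeared, the $\mathfrak{N}_{1}$-contribution is exactly $\mathcal{T}_{\mathbb{S}^{2},1}\mathcal{S}_{*}\varphi(\xi)$ and the $\mathfrak{N}_{2}$-contribution is exactly $\mathcal{T}_{\mathbb{S}^{2},2}\mathcal{S}_{*}\varphi(\xi)$, directly from the definitions of these operators; adding the constant $C_{\varphi}$ and reading the left-hand side as $\mathcal{S}_{*}\varphi(\xi)=\varphi(\mathcal{S}^{-1}\xi)$ gives $\mathcal{S}_{*}\varphi=\mathcal{T}_{\mathbb{S}^{2}}\mathcal{S}_{*}\varphi+C_{\varphi}$.

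I expect the main obstacle to be nothing conceptually deep, but rather the careful bookkeeping in the conformal change of variables: checking identity (a) and, above all, the weight cancellation $(4-\alpha)+\alpha-4=0$, which is precisely where the specific value of $C_{\alpha}$ and the exponent $\tfrac{4-\alpha}{2}$ in the definition of $U$ enter in an essential way. One also needs to observe that $\mathcal{S}$ is a smooth diffeomorphism from $\mathbb{R}^{2}$ onto $\mathbb{S}^{2}$ with the south pole removed --- a single point, hence negligible --- so that the change of variables is valid, and that Fubini's theorem is justified by the $L^{1}\cap L^{\infty}$ control on $\mathfrak{N}(\varphi)$; beyond these points the argument is routine substitution.
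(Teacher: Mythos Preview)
Your proposal is correct and follows essentially the same route as the paper's proof: start from the integral representation \eqref{green-frormula}, split $\mathfrak{N}=\mathfrak{N}_{1}+\mathfrak{N}_{2}$, rewrite each piece in terms of $\rho$ and the spherical distance via \eqref{eq-S-x-S-y-x-y}, and change variables to $\mathbb{S}^{2}$ using \eqref{identity-integral}. The weight cancellation $(4-\alpha)+\alpha-4=0$ that you single out is exactly what the paper's computation encodes when it arrives at $\mathfrak{N}_{1}(\varphi)(y)=C_{\alpha}^{4-\alpha}2^{-(4-\alpha)}\rho^{4}(y)\int_{\mathbb{R}^{2}}\rho^{4}(z)\,\varphi(z)\,|\mathcal{S}y-\mathcal{S}z|^{-\alpha}\,\mathrm{d}z$ (and analogously for $\mathfrak{N}_{2}$), so that the remaining $\rho^{4}$ factors are precisely the Jacobians needed to pass to spherical integrals.
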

\begin{proof}
By \eqref{identity-integral} and \eqref{defin-S-*}, we have
\begin{equation}
    \begin{aligned}
        \int_{\mathbb{S}^{2}}| \mathcal{S}_{*}\varphi(\xi)|^{2}d\xi=\int_{\R^{2}}|\varphi(x)|^{2}\rho^{4}(x)dx\lesssim\int_{\R^{2}}\frac{1}{(1+|x|^{2})^{2}}dx<+\infty.
    \end{aligned}
\end{equation}
Moreover, by \eqref{green-frormula}, we obtain that
\begin{equation}\label{lemma-4.4-proof-2}
\begin{aligned}
    \varphi(x)&=-\frac{1}{2\pi}\int_{\R^{2}}\log(|x-y|)\mathfrak{N}(\varphi)(y)dy+C_{\varphi}\\
     &=-\frac{1}{2\pi}\int_{\R^{2}}\log(|x-y|)\mathfrak{N}_{1}(\varphi)(y)dy-\frac{1}{2\pi}\int_{\R^{2}}\log(|x-y|)\mathfrak{N}_{2}(\varphi)(y)dy+C_{\varphi}.
\end{aligned}
\end{equation}
Notice that by \eqref{eq-S-x-S-y-x-y} and \eqref{eq-defin-N-varphi}
    \begin{equation}
    \begin{aligned}
        \mathfrak{N}_{1}(\varphi)(y)&=C_{\alpha}^{4-\alpha}\left(\displaystyle{\int_{\R^2}}\frac{\varphi(z)}{|y-z|^{\alpha}(|1+|z|^{2})^{\frac{4-\alpha}{2}}}dz\right)\frac{1}{(1+|y|^{2})^{\frac{4-\alpha}{2}}}\\
        &=C_{\alpha}^{4-\alpha}2^{-(4-\alpha)}\rho^{4}(y)\displaystyle{\int_{\R^2}}\frac{\varphi(z)}{|\mathcal{S}y-\mathcal{S}z|^{\alpha}}\rho^{4}(z)dz
    \end{aligned}
\end{equation}
and
\begin{equation}\label{lemma-4.4-proof-4}
    \begin{aligned}
         \mathfrak{N}_{2}(\varphi)(y)&=C_{\alpha}^{4-\alpha}\left(\displaystyle{\int_{\R^2}}\frac{1}{|y-z|^{\alpha}(|1+|z|^{2})^{\frac{4-\alpha}{2}}}dz\right)\frac{\varphi(y)}{(1+|y|^{2})^{\frac{4-\alpha}{2}}}\\
        &=C_{\alpha}^{4-\alpha}2^{-(4-\alpha)}\rho^{4}(y)\varphi(y)\displaystyle{\int_{\R^2}}\frac{1}{|\mathcal{S}y-\mathcal{S}z|^{\alpha}}\rho^{4}(z)dz.
    \end{aligned}
\end{equation}
Combining \eqref{lemma-4.4-proof-2}--\eqref{lemma-4.4-proof-4}, we derive that
\begin{equation}
    \begin{aligned}
    \varphi(x)&=-\frac{1}{2\pi}C_{\alpha}^{4-\alpha}2^{-(4-\alpha)}\int_{\R^{2}}\displaystyle{\int_{\R^2}}\frac{\log(|x-y|)}{|\mathcal{S}y-\mathcal{S}z|^{\alpha}}\varphi(z)\rho^{4}(y)\rho^{4}(z)dydz\\
    &\quad-\frac{1}{2\pi}C_{\alpha}^{4-\alpha}2^{-(4-\alpha)}\int_{\R^{2}}\displaystyle{\int_{\R^2}}\frac{\log(|x-y|)}{|\mathcal{S}y-\mathcal{S}z|^{\alpha}}\varphi(y)\rho^{4}(y)\rho^{4}(z)dydz+C_{\varphi}\\
    &=-\frac{1}{2\pi}C_{\alpha}^{4-\alpha}2^{-(4-\alpha)}\int_{\mathbb{S}^{2}}\displaystyle{\int_{\mathbb{S}^{2}}}\log\left(\frac{|\mathcal{S}x-\eta|}{\mathcal{S}_{*}\rho(\mathcal{S}x)\mathcal{S}_{*}\rho(\eta)}\right)\frac{1}{|\eta-\sigma|^{\alpha}}\mathcal{S}_{*}\varphi(\sigma)d\sigma d\eta\\
    &\quad-\frac{1}{2\pi}C_{\alpha}^{4-\alpha}2^{-(4-\alpha)}\int_{\mathbb{S}^{2}}\displaystyle{\int_{\mathbb{S}^{2}}}\log\left(\frac{|\mathcal{S}x-\eta|}{\mathcal{S}_{*}\rho(\mathcal{S}x)\mathcal{S}_{*}\rho(\eta)}\right)\frac{1}{|\eta-\sigma|^{\alpha}}\mathcal{S}_{*}\varphi(\eta)d\sigma d\eta+C_{\varphi},
\end{aligned}
\end{equation}
and consequently,
\begin{equation}
    \begin{aligned}
       \mathcal{S}_{*}\varphi(\xi) &=-\frac{1}{2\pi}C_{\alpha}^{4-\alpha}2^{-(4-\alpha)}\int_{\mathbb{S}^{2}}\displaystyle{\int_{\mathbb{S}^{2}}}\log\left(\frac{|\xi-\eta|}{\mathcal{S}_{*}\rho(\xi)\mathcal{S}_{*}\rho(\eta)}\right)\frac{1}{|\eta-\sigma|^{\alpha}}\mathcal{S}_{*}\varphi(\sigma)d\sigma d\eta\\
         &\quad-\frac{1}{2\pi}C_{\alpha}^{4-\alpha}2^{-(4-\alpha)}\int_{\mathbb{S}^{2}}\displaystyle{\int_{\mathbb{S}^{2}}}\log\left(\frac{|\xi-\eta|}{\mathcal{S}_{*}\rho(\xi)\mathcal{S}_{*}\rho(\eta)}\right)\frac{1}{|\eta-\sigma|^{\alpha}}\mathcal{S}_{*}\varphi(\eta)d\sigma d\eta+C_{\varphi}\\
         &=\mathcal{T}_{\mathbb{S}^{2}} \mathcal{S}_{*}\varphi(\xi)+C_{\varphi}.
    \end{aligned}
\end{equation}
This completes the proof.
\end{proof}

We now classify the solutions to \eqref{eq-2} on $\mathbb{S}^{2}$ via the integral equations and the spherical harmonic decomposition.

\begin{Prop} \label{prop-4.5}
Assume that $\alpha\in (0,2)$, and $\varphi\in L^{2}_{w}(\mathbb{R}^2)$ is a distributional solution to the equation \eqref{eq-2}. Then
\begin{equation}
    \mathcal{S}_{*}\varphi\in\mathcal{H}_{1}.
\end{equation}
\end{Prop}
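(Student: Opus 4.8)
The plan is to push the problem onto the sphere $\mathbb{S}^{2}$ via Lemma~\ref{lemma-4.4} and then use the spherical harmonic decomposition. Write $\Phi:=\mathcal{S}_{*}\varphi$; by Lemma~\ref{lemma-4.4}, $\Phi\in L^{2}(\mathbb{S}^{2})$ and $\Phi=\mathcal{T}_{\mathbb{S}^{2}}\Phi+C_{\varphi}$, and we expand $\Phi=\sum_{k\geq 0}\sum_{j=1}^{2k+1}c_{k,j}Y_{k,j}$ in $L^{2}(\mathbb{S}^{2})$. Since $\alpha\in(0,2)$, the kernels $|\xi-\eta|^{-\alpha}$, $\log|\xi-\eta|$ and $\log\mathcal{S}_{*}\rho(\eta)=\tfrac12\log(1+\eta_{3})$ all define bounded operators on $L^{2}(\mathbb{S}^{2})$ (Schur's test, using $\sup_{\xi}\int_{\mathbb{S}^{2}}|\xi-\eta|^{-\alpha}\,\mathrm{d}\eta<\infty$), so all the term-by-term manipulations and applications of Fubini below are legitimate; by Lemma~\ref{lem-2.4} the first two act diagonally on each $\mathcal{H}_{k}$. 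The main difficulty to keep in mind is that $\mathcal{T}_{\mathbb{S}^{2}}$ is \emph{not} literally diagonal in the basis $\{Y_{k,j}\}$: the conformal-factor corrections $\log\mathcal{S}_{*}\rho(\xi)$ and $\log\mathcal{S}_{*}\rho(\eta)$ obstruct diagonality. The heart of the argument is to show these corrections produce only an additive constant together with a multiple of $\int_{\mathbb{S}^{2}}\Phi$, and that this last integral vanishes; the one mode, $k=0$, which the leftover constant could pollute, is then pinned down separately, again by $\int_{\mathbb{S}^{2}}\Phi=0$.

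Thus the first real step is to prove $\int_{\mathbb{S}^{2}}\Phi\,\mathrm{d}\xi=0$, i.e.\ $c_{0,1}=0$. This is exactly where the integral identity $\int_{\R^{2}}\mathfrak{N}(\varphi)=0$ from \eqref{integral condition} is used. Changing variables to the sphere as in the proof of Lemma~\ref{lemma-4.4}, and invoking Lemma~\ref{lem-2.4} with a constant test function (so that $\int_{\mathbb{S}^{2}}|\eta-\sigma|^{-\alpha}\,\mathrm{d}\sigma=\mu_{0}(\alpha)$), one obtains $\int_{\R^{2}}\mathfrak{N}_{1}(\varphi)=C_{\alpha}^{4-\alpha}2^{-(4-\alpha)}\mu_{0}(\alpha)\int_{\mathbb{S}^{2}}\Phi$ and $\int_{\R^{2}}\mathfrak{N}_{2}(\varphi)=\tfrac{4-\alpha}{2}\int_{\mathbb{S}^{2}}\Phi$. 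A direct computation with the explicit values of $C_{\alpha}$ and $\mu_{0}(\alpha)$ gives $C_{\alpha}^{4-\alpha}2^{-(4-\alpha)}\mu_{0}(\alpha)=\tfrac{4-\alpha}{2}$, hence $0=\int_{\R^{2}}\mathfrak{N}(\varphi)=(4-\alpha)\int_{\mathbb{S}^{2}}\Phi$, so that $\int_{\mathbb{S}^{2}}\Phi=0$.

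Next I would compute $\mathcal{T}_{\mathbb{S}^{2}}\Phi$ mode by mode. Write $\log\!\bigl(|\xi-\eta|/(\mathcal{S}_{*}\rho(\xi)\mathcal{S}_{*}\rho(\eta))\bigr)=\log|\xi-\eta|-\log\mathcal{S}_{*}\rho(\xi)-\log\mathcal{S}_{*}\rho(\eta)$. In $\mathcal{T}_{\mathbb{S}^{2},1}$ the inner $\sigma$-integral is $\int_{\mathbb{S}^{2}}|\eta-\sigma|^{-\alpha}\Phi(\sigma)\,\mathrm{d}\sigma=\sum_{k,j}c_{k,j}\mu_{k}(\alpha)Y_{k,j}(\eta)$, and in $\mathcal{T}_{\mathbb{S}^{2},2}$ it is the constant $\mu_{0}(\alpha)$. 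One then checks: (i) the $\log\mathcal{S}_{*}\rho(\xi)$-piece equals $\log\mathcal{S}_{*}\rho(\xi)$ times a multiple of $\int_{\mathbb{S}^{2}}\Phi=0$, hence vanishes; (ii) the $\log\mathcal{S}_{*}\rho(\eta)$-piece integrates in $\eta$ to a finite number (an $L^{2}(\mathbb{S}^{2})$ inner product), hence only adds a constant; (iii) the $\log|\xi-\eta|$-piece is diagonalized by the second identity of Lemma~\ref{lem-2.4}, $\int_{\mathbb{S}^{2}}\log|\xi-\eta|\,Y_{k,j}(\eta)\,\mathrm{d}\eta=\tilde{\mu}_{k}Y_{k,j}(\xi)$. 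Collecting terms and using $c_{0,1}=0$,
\begin{equation*}
\mathcal{T}_{\mathbb{S}^{2}}\Phi(\xi)=-\frac{C_{\alpha}^{4-\alpha}2^{-(4-\alpha)}}{2\pi}\sum_{k\geq 1}\sum_{j=1}^{2k+1}\tilde{\mu}_{k}\bigl(\mu_{k}(\alpha)+\mu_{0}(\alpha)\bigr)c_{k,j}\,Y_{k,j}(\xi)+\textrm{const}.
\end{equation*}

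Finally, I would project the fixed-point equation $\Phi=\mathcal{T}_{\mathbb{S}^{2}}\Phi+C_{\varphi}$ onto each $Y_{k,j}$ with $k\geq 1$ (which are orthogonal to constants) and use $\tilde{\mu}_{k}=-2\pi/(k(k+1))$ to get, for all $k\geq 1$,
\begin{equation*}
c_{k,j}\left(1-\frac{C_{\alpha}^{4-\alpha}2^{-(4-\alpha)}\bigl(\mu_{k}(\alpha)+\mu_{0}(\alpha)\bigr)}{k(k+1)}\right)=0.
\end{equation*}
Since $C_{\alpha}^{4-\alpha}2^{-(4-\alpha)}\mu_{1}(\alpha)=\tfrac{\alpha}{2}$ (again a direct computation with the explicit constants), for $k=1$ the bracket is $1-\tfrac12\bigl(\tfrac{\alpha}{2}+\tfrac{4-\alpha}{2}\bigr)=0$, so $c_{1,j}$ is unconstrained. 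For $k\geq 2$, the strict monotonicity $\mu_{k}(\alpha)<\mu_{1}(\alpha)$ recorded right after Lemma~\ref{lem-2.4} gives $C_{\alpha}^{4-\alpha}2^{-(4-\alpha)}\bigl(\mu_{k}(\alpha)+\mu_{0}(\alpha)\bigr)<2\leq k(k+1)$, so the bracket is nonzero and $c_{k,j}=0$ for every $j$. Together with $c_{0,1}=0$, this forces $\Phi=\sum_{j=1}^{3}c_{1,j}Y_{1,j}\in\mathcal{H}_{1}$, as claimed. The genuinely delicate points are therefore the two already flagged: the cancellation of the $\log\mathcal{S}_{*}\rho(\xi)$-corrections via $\int_{\mathbb{S}^{2}}\Phi=0$, and the derivation of $\int_{\mathbb{S}^{2}}\Phi=0$ from \eqref{integral condition}; the rest is Funk--Hecke bookkeeping plus the elementary inequality $k(k+1)\geq 6>2$ for $k\geq 2$.
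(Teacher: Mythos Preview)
Your proposal is correct and follows essentially the same route as the paper: first use $\int_{\R^{2}}\mathfrak{N}(\varphi)=0$ from Proposition~\ref{prop-integral representation} to kill the $k=0$ mode (equivalently $\int_{\mathbb{S}^{2}}\mathcal{S}_{*}\varphi=0$), then diagonalize via Funk--Hecke and observe that the $\log\mathcal{S}_{*}\rho$ corrections only contribute constants or multiples of $\int_{\mathbb{S}^{2}}\Phi=0$, yielding the eigenvalue relation $c_{k,j}\bigl(1-C_{\alpha}^{4-\alpha}2^{-(4-\alpha)}(\mu_{k}(\alpha)+\mu_{0}(\alpha))/(k(k+1))\bigr)=0$; the paper verifies the $k=1$ coefficient equals $1$ and the $k\geq 2$ coefficient is strictly less than $1$ exactly as you do. The only cosmetic difference is that the paper projects the fixed-point identity onto $Y_{k,j}$ directly (so the $\log\mathcal{S}_{*}\rho(\eta)$ term disappears via $\int Y_{k,j}=0$ rather than surviving as a constant), whereas you first compute $\mathcal{T}_{\mathbb{S}^{2}}\Phi$ pointwise and then project---but the underlying computations are identical.
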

\begin{proof}
Notice that $\mathcal{S}_{*}\varphi \in L^{2}(\mathbb{S}^{2})$, the orthogonal decomposition in \eqref{orthogonal decomposition} yields
\begin{equation}
    \mathcal{S}_{*}\varphi(\xi)=\sum_{k=0}^{\infty}\sum_{j=1}^{2k+1}\Phi_{k,j}Y_{k,j}(\xi),
\end{equation}
where $\Phi_{k,j}:=\int_{\mathbb{S}^{2}} \mathcal{S}_{*}\varphi(\xi)Y_{k,j}(\xi)d\xi$. When $k=0$, by \eqref{improtant-estimate-1}, \eqref{identity-integral}, and \eqref{integral condition}
\begin{equation}
\begin{aligned}
   \Phi_{0,1}&=\frac{1}{2\sqrt{\pi}}\int_{\mathbb{S}^{2}} \mathcal{S}_{*}\varphi(\xi)d\xi=\frac{1}{2\sqrt{\pi}}\int_{\R^{2}}\varphi(x)\rho^{4}(x)dx\\
   &=\frac{2}{\sqrt{\pi}C_{\alpha}^{2}}\int_{\R^{2}}e^{\frac{4}{4-\alpha}U(x)}\varphi(x)dx=\frac{1}{2(4-\alpha)\sqrt{\pi}}\int_{\R^{2}}\mathfrak{N}(\varphi)(x)dx=0. 
\end{aligned}   
\end{equation} 
When $k\geq 1$, by Lemma \ref{lemma-4.4}, we have
\begin{equation}
    \begin{aligned}
        \Phi_{k,j}&=\int_{\mathbb{S}^{2}} \mathcal{S}_{*}\varphi(\xi)Y_{k,j}(\xi)d\xi=\int_{\mathbb{S}^{2}} \mathcal{T}_{\mathbb{S}^{2}}\mathcal{S}_{*}\varphi(\xi)Y_{k,j}(\xi)d\xi+C_{\varphi}\int_{\mathbb{S}^{2}} Y_{k,j}(\xi)d\xi\\
        &=-\frac{1}{2\pi}C_{\alpha}^{4-\alpha}2^{-(4-\alpha)}\int_{\mathbb{S}^{2}}\int_{\mathbb{S}^{2}}\displaystyle{\int_{\mathbb{S}^{2}}}\log\left(\frac{|\xi-\eta|}{\mathcal{S}_{*}\rho(\xi)\mathcal{S}_{*}\rho(\eta)}\right)\frac{1}{|\eta-\sigma|^{\alpha}}\mathcal{S}_{*}\varphi(\sigma) Y_{k,j}(\xi)d\sigma d\eta d\xi\\
        &\quad-\frac{1}{2\pi}C_{\alpha}^{4-\alpha}2^{-(4-\alpha)}\int_{\mathbb{S}^{2}}\int_{\mathbb{S}^{2}}\displaystyle{\int_{\mathbb{S}^{2}}}\log\left(\frac{|\xi-\eta|}{\mathcal{S}_{*}\rho(\xi)\mathcal{S}_{*}\rho(\eta)}\right)\frac{1}{|\eta-\sigma|^{\alpha}}\mathcal{S}_{*}\varphi(\eta) Y_{k,j}(\xi)d\sigma d\eta d\xi.
    \end{aligned}
\end{equation}
Then by Lemma \ref{lem-2.4}, we know that
\begin{equation}
    \begin{aligned}
        &\int_{\mathbb{S}^{2}}\displaystyle{\int_{\mathbb{S}^{2}}}\log\left(\frac{|\xi-\eta|}{\mathcal{S}_{*}\rho(\xi)\mathcal{S}_{*}\rho(\eta)}\right)\frac{1}{|\eta-\sigma|^{\alpha}}Y_{k,j}(\xi)d\eta d\xi\\
        &=\int_{\mathbb{S}^{2}}\displaystyle{\int_{\mathbb{S}^{2}}}\log(|\xi-\eta|)\frac{1}{|\eta-\sigma|^{\alpha}}Y_{k,j}(\xi)d\eta d\xi\\
        &\quad-\int_{\mathbb{S}^{2}}\displaystyle{\int_{\mathbb{S}^{2}}}\log(\mathcal{S}_{*}\rho(\xi))\frac{1}{|\eta-\sigma|^{\alpha}}Y_{k,j}(\xi)d\eta d\xi\\
        &\quad-\int_{\mathbb{S}^{2}}\displaystyle{\int_{\mathbb{S}^{2}}}\log(\mathcal{S}_{*}\rho(\eta))\frac{1}{|\eta-\sigma|^{\alpha}}Y_{k,j}(\xi)d\eta d\xi\\
        &=\tilde{\mu}_{k}\mu_{k}(\alpha)Y_{k,j}(\sigma)-\mu_{0}(\alpha)\displaystyle{\int_{\mathbb{S}^{2}}}\log(\mathcal{S}_{*}\rho(\xi))Y_{k,j}(\xi)d\xi
    \end{aligned}
\end{equation}
and
\begin{equation}
    \begin{aligned}
        &\int_{\mathbb{S}^{2}}\displaystyle{\int_{\mathbb{S}^{2}}}\log\left(\frac{|\xi-\eta|}{\mathcal{S}_{*}\rho(\xi)\mathcal{S}_{*}\rho(\eta)}\right)\frac{1}{|\eta-\sigma|^{\alpha}} Y_{k,j}(\xi)d\sigma d\xi\\
         &=\int_{\mathbb{S}^{2}}\displaystyle{\int_{\mathbb{S}^{2}}}\log(|\xi-\eta|)\frac{1}{|\eta-\sigma|^{\alpha}}Y_{k,j}(\xi)d\sigma d\xi\\
        &\quad-\int_{\mathbb{S}^{2}}\displaystyle{\int_{\mathbb{S}^{2}}}\log(\mathcal{S}_{*}\rho(\xi))\frac{1}{|\eta-\sigma|^{\alpha}}Y_{k,j}(\xi)d\sigma  d\xi\\
        &\quad-\int_{\mathbb{S}^{2}}\displaystyle{\int_{\mathbb{S}^{2}}}\log(\mathcal{S}_{*}\rho(\eta))\frac{1}{|\eta-\sigma|^{\alpha}}Y_{k,j}(\xi)d\sigma  d\xi\\
        &=\tilde{\mu}_{k}\mu_{0}(\alpha)Y_{k,j}(\eta)-\mu_{0}(\alpha)\displaystyle{\int_{\mathbb{S}^{2}}}\log(\mathcal{S}_{*}\rho(\xi))Y_{k,j}(\xi)d\xi.  
    \end{aligned}
\end{equation}
Hence, we have
\begin{equation}
    \begin{aligned}
         \Phi_{k,j}&=-\frac{1}{2\pi}C_{\alpha}^{4-\alpha}2^{-(4-\alpha)}\tilde{\mu}_{k}\mu_{k}
         (\alpha)\int_{\mathbb{S}^{2}}\mathcal{S}_{*}\varphi(\sigma)Y_{k,j}(\sigma)
         d\sigma \\
         &\quad-\frac{1}{2\pi}C_{\alpha}^{4-\alpha}2^{-(4-\alpha)}\tilde{\mu}_{k}\mu_{0}
         (\alpha)\int_{\mathbb{S}^{2}}\mathcal{S}_{*}\varphi(\eta)Y_{k,j}(\eta)
         d\eta \\
         &\quad+\frac{1}{2\pi}C_{\alpha}^{4-\alpha}2^{-(4-\alpha)}\mu_{0}(\alpha)\left(\displaystyle{\int_{\mathbb{S}^{2}}}\log(\mathcal{S}_{*}\rho(\xi))Y_{k,j}(\xi)d\xi\right)\left(\int_{\mathbb{S}^{2}}\mathcal{S}_{*}\varphi(\sigma)
         d\sigma \right)\\
          &\quad+\frac{1}{2\pi}C_{\alpha}^{4-\alpha}2^{-(4-\alpha)}\mu_{0}(\alpha)\left(\displaystyle{\int_{\mathbb{S}^{2}}}\log(\mathcal{S}_{*}\rho(\xi))Y_{k,j}(\xi)d\xi\right)\left(\int_{\mathbb{S}^{2}}\mathcal{S}_{*}\varphi(\eta)
         d\eta\right)\\
         &=-\frac{1}{2\pi}C_{\alpha}^{4-\alpha}2^{-(4-\alpha)}\tilde{\mu}_{k}(\mu_{k}(\alpha)+\mu_{0}(\alpha))\Phi_{k,j}.
    \end{aligned}
\end{equation}
When $k=1$,
\begin{equation}
    \begin{aligned}
        -\frac{1}{2\pi}C_{\alpha}^{4-\alpha}2^{-(4-\alpha)}\tilde{\mu}_{1}(\mu_{1}(\alpha)+\mu_{0}(\alpha))=\frac{1}{2^{5-\alpha}}\frac{(2-\alpha)(4-\alpha)}{\pi}\frac{2^{5-\alpha}\pi}{(2-\alpha)(4-\alpha)}=1.
    \end{aligned}
\end{equation}
When $k\geq 2$,
\begin{equation}
    \begin{aligned}
        -\frac{1}{2\pi}&C_{\alpha}^{4-\alpha}2^{-(4-\alpha)}\tilde{\mu}_{k}(\mu_{k}(\alpha)+\mu_{0}(\alpha))\\
        &=\frac{1}{2^{4-\alpha}k(k+1)}\frac{(2-\alpha)(4-\alpha)}{\pi}\left(2^{2-\alpha} \pi \dfrac{\Gamma\left(k+ \frac{\alpha}{2} \right) \Gamma\left(\frac{2-\alpha}{2} \right)}{\Gamma\left( \frac{\alpha}{2} \right) \Gamma\left( k +2- \frac{ \alpha}{2} \right)}+\frac{2^{3-\alpha}\pi}{2-\alpha}\right)\\
        &<\frac{1}{2^{4-\alpha}k(k+1)}\frac{(2-\alpha)(4-\alpha)}{\pi}\frac{2^{5-\alpha}\pi}{(2-\alpha)(4-\alpha)}\\
        &=\frac{2}{k(k+1)}<1.
    \end{aligned}
\end{equation}
Hence $\Phi_{k,j}=0$ for any $k\not=1$ and so $\mathcal{S}_{*}\varphi\in\mathcal{H}_{1}$. This completes the proof.
\end{proof}

\noindent{\bf{Proof of Theorem \ref{thm-1}}.}
By Proposition \ref{prop-4.5} and \eqref{defin-H-1}, we have
    \begin{equation}
        \mathcal{S}_{*}\varphi\in\text{span}\{ \xi_j \mid 1 \leqslant j \leqslant 3 \}.
    \end{equation}
    Moreover, by Lemma \ref{prop-1} and Remark \ref{remark-1}, we obtain
    \begin{equation}
        \varphi\in\text{span}\{ \varphi_j \mid 1 \leqslant j \leqslant 3 \}.
    \end{equation}
Next, it suffices to show that equation \eqref{eq-thm-4} holds. First, by \eqref{eq-thm-3}, we have $|\nabla\varphi(x)|\lesssim \frac{1}{|x|^{2}}$ for any $|x|$ large enough. Then by \eqref{improtant-estimate-1}, \eqref{eq-2} and the Gauss-Green formula, we have
  \begin{equation}\label{proof-thm-1-1}
    \begin{aligned}
        \int_{\R^{2}}|\nabla \varphi(x)|^{2}dx&=\int_{\R^2}\int_{\R^2}\frac{e^{U(y)}\varphi(y)e^{U(x)}\varphi(x)}{|x-y|^{\alpha}}dydx+\int_{\R^2}\int_{\R^2}\frac{e^{U(y)}e^{U(x)}\varphi^{2}(x)}{|x-y|^{\alpha}}dydx\\
        &=\int_{\R^2}\int_{\R^2}\frac{e^{U(y)}\varphi(y)e^{U(x)}\varphi(x)}{|x-y|^{\alpha}}dydx+\frac{2(4-\alpha)}{C_{\alpha}^{2}}\int_{\R^2}e^{\frac{4}{4-\alpha}U(x)}\varphi^{2}(x)dx.
    \end{aligned}
\end{equation}
Moreover, by the HLS inequality and the H\"older inequality, we obtain
\begin{equation}\label{proof-thm-1-2}
    \begin{aligned}
        \int_{\R^2}\int_{\R^2}\frac{e^{U(y)}\varphi(y)e^{U(x)}\varphi(x)}{|x-y|^{\alpha}}dydx&\leq  \frac{2\pi^{\frac{\alpha}{2}}}{2-\alpha}\left(\int_{\R^2}e^{\frac{4}{4-\alpha}U(x)}|\varphi|^{\frac{4}{4-\alpha}}(x)dx\right)^{\frac{4-\alpha}{2}}\\
        &\leq \frac{2\pi^{\frac{\alpha}{2}}}{2-\alpha}\left(\int_{\R^2}e^{\frac{4}{4-\alpha}U(x)}dx\right)^{\frac{2-\alpha}{2}}\int_{\R^2}e^{\frac{4}{4-\alpha}U(x)}\varphi^{2}(x)dx.
    \end{aligned}
\end{equation}
Combining \eqref{improtant-estimate-2}, \eqref{proof-thm-1-1} and \eqref{proof-thm-1-2}, we have
\begin{equation}
    \begin{aligned}
        \int_{\R^{2}}|\nabla \varphi(x)|^{2}dx&\leq \frac{2\pi^{\frac{\alpha}{2}}}{2-\alpha}\left(\int_{\R^2}e^{\frac{4}{4-\alpha}U(x)}dx\right)^{\frac{2-\alpha}{2}}\int_{\R^2}e^{\frac{4}{4-\alpha}U(x)}\varphi^{2}(x)dx\\
        &\quad+\frac{2(4-\alpha)}{C_{\alpha}^{2}}\int_{\R^2}e^{\frac{4}{4-\alpha}U(x)}\varphi^{2}(x)dx\\
        &=\left(\frac{2\pi}{2-\alpha} C_{\alpha}^{4-\alpha}+2(4-\alpha)\right)\int_{\R^2}\frac{1}{(1+|x|^{2})^{2}}\varphi^{2}(x)dx\\
        &=4(4-\alpha)\int_{\R^2}\frac{1}{(1+|x|^{2})^{2}}\varphi^{2}(x)dx.
    \end{aligned}
\end{equation}
Thus
\begin{equation}
    \begin{aligned}
        \|\varphi\|_{H^{1}_{w}(\R^{2})}^{2}=\|\nabla \varphi\|_{L^{2}(\R^{2})}^{2}+\|\varphi\|_{L^{2}_{w}(\R^{2})}^{2}\leq (4(4-\alpha)+1)\|\varphi\|_{L^{2}_{w}(\R^{2})}^{2},
    \end{aligned}
\end{equation}
which completes the proof.

\appendix
\section{}\label{appendix}
\begin{Prop}
    It holds that $\dot{H}^{1}(\mathbb{R}^{2})\subset L^{2}_{w}(\R^{2})$.
\end{Prop}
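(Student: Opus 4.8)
The plan is to establish the inclusion by a dyadic decomposition of $\R^2$ combined with a scale-invariant Poincar\'e inequality on annuli. The governing heuristic is that the weight $(1+|x|^2)^{-2}$ decays like $|x|^{-4}$ at infinity, whereas a function with $L^2$ gradient on $\R^2$ grows at most like $\sqrt{\log|x|}$ in an $L^2$-averaged sense, so the second is comfortably dominated by the first and every series that appears converges.

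Concretely, given $u\in\dot H^1(\R^2)$ (so $u\in L^1_{loc}(\R^2)$ and $\nabla u\in L^2(\R^2)$), I would write $\R^2=B_1\cup\bigcup_{k\ge1}A_k$ with $A_k:=\{x:2^{k-1}\le|x|<2^k\}$, and for a bounded set $\Omega$ denote by $\bar u_\Omega:=\frac{1}{|\Omega|}\int_\Omega u$ the mean of $u$ over $\Omega$, which is finite because $u\in L^1_{loc}$. On $B_1$ the Poincar\'e inequality gives $\|u-\bar u_{B_1}\|_{L^2(B_1)}\lesssim\|\nabla u\|_{L^2(B_1)}$, hence $\int_{B_1}\frac{u^2}{(1+|x|^2)^2}\,dx\le\int_{B_1}u^2\,dx<+\infty$. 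On $A_k$ one has $1+|x|^2\sim 2^{2k}$ and $|A_k|\sim 2^{2k}$, and since $A_k$ is a fixed dilate of $\{1\le|x|<2\}$ the scaled Poincar\'e inequality gives $\int_{A_k}|u-\bar u_{A_k}|^2\,dx\lesssim 2^{2k}\int_{A_k}|\nabla u|^2\,dx$; combining these facts,
\[
\int_{A_k}\frac{u^2}{(1+|x|^2)^2}\,dx\lesssim 2^{-4k}\int_{A_k}u^2\,dx\lesssim 2^{-2k}\int_{A_k}|\nabla u|^2\,dx+2^{-2k}|\bar u_{A_k}|^2 .
\]

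The remaining ingredient is a bound on $|\bar u_{A_k}|$. I would apply Poincar\'e on the double annulus $A_k\cup A_{k+1}$ (a fixed dilate of $\{1\le|x|<4\}$) to compare $\bar u_{A_k}$ and $\bar u_{A_{k+1}}$ with the mean of $u$ over $A_k\cup A_{k+1}$, obtaining $|\bar u_{A_{k+1}}-\bar u_{A_k}|\lesssim\|\nabla u\|_{L^2(A_k\cup A_{k+1})}$; telescoping and the Cauchy--Schwarz inequality, using that the double annuli have bounded overlap, then give $|\bar u_{A_k}|\lesssim|\bar u_{A_1}|+\sqrt{k}\,\|\nabla u\|_{L^2(\R^2)}$, hence $|\bar u_{A_k}|^2\lesssim|\bar u_{A_1}|^2+k\,\|\nabla u\|_{L^2(\R^2)}^2$. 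Summing over $k$ and using $\sum_k 2^{-2k}<\infty$ and $\sum_k k2^{-2k}<\infty$ one gets
\[
\sum_{k\ge1}\int_{A_k}\frac{u^2}{(1+|x|^2)^2}\,dx\lesssim\|\nabla u\|_{L^2(\R^2)}^2+|\bar u_{A_1}|^2<+\infty ,
\]
which together with the $B_1$ estimate shows $u\in L^2_w(\R^2)$.

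The hard part is precisely the logarithmic growth bound for the annular means: a priori a $\dot H^1(\R^2)$ function need not lie in $L^2(\R^2)$ or $L^\infty(\R^2)$, so one must quantify how fast its means over dyadic annuli can grow and check that this is beaten by the $|x|^{-4}$ decay of the weight; the factor $\sqrt{k}$ coming out of the telescoping-plus-Cauchy--Schwarz step is exactly what keeps the series summable. As an alternative route one could transplant the problem to $\mathbb{S}^2$ via the stereographic projection, under which the Dirichlet energy is conformally invariant and the $L^2_w$ norm equals a multiple of the $L^2(\mathbb{S}^2)$ norm, and then invoke the Poincar\'e inequality on the compact manifold $\mathbb{S}^2$; but verifying that $u\circ\mathcal{S}_*\in L^1(\mathbb{S}^2)$ is essentially the same estimate in disguise.
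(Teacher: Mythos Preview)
Your argument is correct. Both your proof and the paper's use a dyadic decomposition of $\R^2$ and control how fast the averages of $u$ over the dyadic pieces can drift, but the underlying tool differs. The paper invokes the endpoint embedding $\dot H^1(\R^2)\hookrightarrow\mathrm{BMO}(\R^2)$ together with the John--Nirenberg inequality, works with dyadic cubes $Q_k=Q(0,2^k)$, and chains $|f_{Q_k}-f_{Q_0}|\le\sum_{i=1}^k|f_{Q_i}-f_{Q_{i-1}}|\lesssim k\|f\|_{\mathrm{BMO}}$ to feed into the weighted sum. You bypass $\mathrm{BMO}$ entirely: the scale-invariant Poincar\'e inequality on annuli replaces John--Nirenberg for the oscillation bound, and your telescoping-plus-Cauchy--Schwarz step gives the sharper drift estimate $|\bar u_{A_k}-\bar u_{A_1}|\lesssim\sqrt{k}\,\|\nabla u\|_{L^2}$ (versus the paper's linear $k$), though either suffices against the weight $2^{-2k}$. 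Your route is more elementary and self-contained; the paper's route is a standard harmonic-analysis package that would transfer verbatim to other borderline Sobolev embeddings.
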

\begin{proof} For any $f\in \dot{H}^{1}(\mathbb{R}^{2})$, Theorem~1.48 in \cite{Bahouri2011} implies that $f \in \mathrm{BMO}(\mathbb{R}^{2})$ and there exists a constant $C > 0$ such that
\begin{equation}  
\|f\|_{\mathrm{BMO}(\mathbb{R}^{2})} \leq C \|f\|_{\dot{H}^{1}(\mathbb{R}^{2})}.
\end{equation}  
Moreover, by the John-Nirenberg inequality \cite[Chapter IV]{Stein1993}, we have $f \in L^{p}_{\mathrm{loc}}(\mathbb{R}^{2})$ for any $1 \leq p < \infty$ and there exists a constant $B_{p}>0$ such that
\begin{equation}\label{regularity-proof-2}  
\sup_{Q } \left( \frac{1}{|Q|} \int_{Q} |f(x) - f_{Q}|^{p} \, dx \right)^{1/p} \leq B_{p} \|f\|_{\mathrm{BMO}(\mathbb{R}^{2})},
\end{equation}
where the supremum is taken over all cubes $Q$ in $\mathbb{R}^{2}$. 

Next, let $Q_{k}=Q(0,2^{k})$, $S_{k}=Q_{k}\setminus Q_{k-1}$ for $k\in\mathbb{N}$, $S_{0}=Q_{0}$, and
\begin{equation}
I_{k}=\left(\int_{S_{k}}\frac{|f(x)-f_{Q_{0}}|^{2}}{(1+|x|^{2})^{2}}\,dx\right)^{\frac{1}{2}},\quad k\in\mathbb{N}_{0}=\{0,1,2,\cdots\}.
\end{equation}
Then, we have
\begin{equation}\label{regularity-proof-4}
\left(\int_{\R^{2}}\frac{|f(x)-f_{Q_{0}}|^{2}}{(1+|x|^{2})^{2}}\,dx\right)^{\frac{1}{2}}=I_{0}+\sum_{k=1}^{+\infty}I_{k}.
\end{equation}
By \eqref{regularity-proof-2}, the first term can be controlled as follows
\begin{equation}\label{regularity-proof-5}
I_{0}=\left(\int_{Q_{0}}\frac{|f(x)-f_{Q_{0}}|^{2}}{(1+|x|^{2})^{2}}\,dx\right)^{\frac{1}{2}}\leqslant\left(\int_{Q_{0}}|f(x)-f_{Q_{0}} |^{2}\,dx\right)^{\frac{1}{2}}\leqslant B_{2}|Q_{0}|^{\frac{1}{2}}\,\|f\|_{\textrm{BMO}(\R^{2})}.
\end{equation}
Notice that for $x\in S_{k}$, we have $|x|>2^{k-2}$ and then $(1+|x|^{2})^{2}>2^{4(k-2)}$.
Hence,
\begin{equation}
    \begin{aligned}
        I_{k} &\leqslant 2^{-2(k-2)}\left(\int_{Q_{k}}|f(x)-f_{Q_{0}}|^{2}\,dx\right)^{\frac{1}{2}} \\
&\leqslant  2^{-2(k-2)}\left\{\left(\int_{Q_{k}}|f(x)-f_{Q_{k}}|^{2}\,dx\right)^{\frac{1}{2}}+\left(\int_{Q_{k}}|f_{Q_{k}}-f_{Q_{0}}|^{2}\,dx\right)^{\frac{1}{2}} \right\}\\
&\leqslant2^{-2(k-2)}|Q_{k}|^{\frac{1}{2}}\left(B_{2}\|f\|_{\textrm{BMO}(\R^{2})}+|f_{Q_{k}} -f_{Q_{0}}|\right) \\
&= 2^{4-k}\left(B_{2}\|f\|_{\textrm{BMO}(\R^{2})}+|f_{Q_{k}}-f_{Q_{0}}|\right).
    \end{aligned}
\end{equation}
Moreover, we have
\begin{equation}
    \begin{aligned}
        |f_{Q_{k}}-f_{Q_{0}}| &\leqslant \sum_{i=1}^{k}|f_{Q_{i}}-f_{Q_{i-1}} | \\
&\leqslant \sum_{i=1}^{k}\frac{1}{|Q_{i-1}|}\int_{Q_{i-1}}|f(x)-f_{Q_{i}}|\,dx \\
&\leqslant \sum_{i=1}^{k}\frac{4}{|Q_{i}|}\int_{Q_{i}}|f(x)-f_{Q_{i}}|\,dx \\
&\leqslant 4k\|f\|_{\textrm{BMO}(\R^{2})}.
    \end{aligned}
\end{equation}
Therefore, the second term can be controlled as follows
\begin{equation}\label{regularity-proof-8}
    \sum_{k=1}^{+\infty}I_{k}\leq \|f\|_{\textrm{BMO}(\R^{2})}\sum_{i=1}^{+\infty}(2^{4-k}(B_{2}+4k))\lesssim \|f\|_{\textrm{BMO}(\R^{2})}.
\end{equation}
Combining \eqref{regularity-proof-4}, \eqref{regularity-proof-5} and \eqref{regularity-proof-8}, we obtain that
\begin{equation}
\begin{aligned}
     \left(\int_{\R^{2}}\frac{|f(x)|^{2}}{(1+|x|^{2})^{2}}dx\right)^{\frac{1}{2}}&\leq \left(\int_{\R^{2}}\frac{|f(x)-f_{Q_{0}}|^{2}}{(1+|x|^{2})^{2}}dx\right)^{\frac{1}{2}}+ \left(\int_{\R^{2}}\frac{|f_{Q_{0}}|^{2}}{(1+|x|^{2})^{2}}dx\right)^{\frac{1}{2}}\\
     &\lesssim \|f\|_{\textrm{BMO}(\R^{2})}+|f_{Q_{0}}|\lesssim\|f\|_{\dot{H}^{1}(\R^{2})}+|f_{Q_{0}}|.
\end{aligned}    
\end{equation}
This completes the proof.
\end{proof}


\end{document}